\newtheorem{teorema}{Theorem}
\newtheorem{lema}{Lemma}
\newenvironment{proof}[1][Proof]{\noindent\textbf{#1. }}{\hfill\rule{0.5em}{0.5em}}
\begin{document}

\title{A multiscale model of cell mobility: From a kinetic to a hydrodynamic description}

\author{J. Nieto \& L. Urrutia\thanks{University of Granada. Departamento de Matem\'atica
Aplicada,  18071 Granada, Spain. e-mail: jjmnieto@ugr.es,
lurrutia@ugr.es}}
\date{}
\maketitle

\begin{abstract}
This paper concerns a  model for tumor cell migration through the surrounding extracellular matrix by considering mass balance  phenomena involving the chemical interactions produced on the cell surface. The well--posedness  of this model is proven. An asymptotic analysis via a suitable hydrodynamic limit completes the des\-crip\-tion of the macroscopic behaviour.
\end{abstract}

{\bf Keywords}: Cell mobility, kinetic theory, multiscale models, multicellular systems, hyperbolic limits, chemotaxis.

{\bf AMS Subject Classification}: 35Q92, 92C17.

\section{Introduction}

There is a huge literature describing mathematical models for cell migration through the extracellular matrix (ECM), specially tumor cells, since they u\-sua\-lly try to reach a blood vessel to obtain nutrients or simply invade other parts of the body in a metastatic process. There are a lot of biological mechanisms involved in cell movement such as signaling, diffusion, chemotaxis, haptotaxis, reorientation due to the surrounding tissue fibers, cell--cell interactions, etc., and also some mechanical considerations as balance laws, mechanical forces, pressure, etc. (see for example \cite{BBNS2,BBTY,FLP,KelSur, Hillen,Maini,Mallet,OD}).

In general, there is an analogy with the models for mechanical particles, where biological considerations are included in several ways. For example, reorientations of the particles due to biological interactions can be modeled by a Boltzmann--type equation where the usual collision kernel plays the role of a reorientation kernel.
Of course, macroscopic descriptions (Navier--Stokes or Keller--Segel models) are very common, and the connections between kinetic and hydrodynamic models by means of limiting procedures have been largely treated in the literature (see for example \cite{BBNS3,BBNS1,BBNS2,BBNS4,BBTY,FLP,Hillen}).

Following the analogy with mechanical models, it is remarkable the framework of Kinetic Theory of Active Particles (KTAP) introduced by Bellomo et al. (see for example \cite{BBNS3} and references therein), where \textit{active particles} play the double role of mechanical entities and living beings. This theory allows to cons\-truct models for cell movement that take into account the he\-te\-ro\-geneity of cells, the biological interactions, birth/death phenomena, and also different scales of description.
In this spirit, a recent paper by Kelkel and Surulescu \cite{KelSur}, presents a multiscale model describing the evolution of a tumor cell population density where the movement of the cells is mainly due to receptor dynamics on the cell surface. The model links several processes such as haptotaxis,
 binding of the cell surface to the ECM fibers, chemotaxis due to a substance originated from the degradation of tissue fibers, and the law of mass action of the receptor on the cell surface.

In this work, we start from the multiscale model presented in \cite{KelSur}, and include some mechanical and biological
considerations that improve it. Actually, in the equations for the ECM dynamics we introduce the mass balance due to interactions with the cell population.
This mass exchange, together with the creation and degradation of
substances, constitute a key part in the state of the ECM, and so modifies the dynamics of the population.
Moreover, we perform a hydrodynamic limit which provides macroscopic information on the
behaviour of the cell population and preserves the influence of the two main biological processes, haptotaxis and chemotaxis.

For the sake of selfconsistency, we briefly describe in the next subsection the elements involved in cell motion as well as our improvements. In Section 2, we prove  existence and uniqueness of solution for the obtained model, and in Section 3 we perform the high--field limit. In particular, we will obtain closed relations between the averaged chemical substances involved in cell movement and the respective concentration in the ECM.
\subsection{The multiscale model}

Concerning the two processes introduced before, haptotaxis and chemotaxis, we find two different chemical compounds in the ECM (see \cite{KelSur} for details), each one related to one type of cell--environment interaction: An o\-rien\-ted protein fiber, responsible of haptotaxis, and a  chemical compound coming from de\-ge\-ne\-ra\-tion of the aforesaid fibers, responsible for chemotaxis.
We denote $Q(t,x,\theta )$ the density of protein fibers at time $t$ and position $x$, oriented towards $\theta \in \mathbb{S}^{n-1}$ for some $n\geq 1$. The density of protein fibers at time $t$ and position $x$ is denoted by $\bar{Q}(t,x)$:
$$ \bar{Q}(t,x) := \int_{\mathbb{S}^{n-1}} Q(t,x,\theta ) d\theta. $$
Finally, denote $L(t,x)$ the concentration of the other chemical compound, a proteolytic product coming from degradation of ECM fibers.
From now on, we will use the same notation for the compounds and for their densities and concentrations. We will call them the $\bar{Q}$ and $L$ compounds, respectively.

The final model is a system consisting of a kinetic model for the cell po\-pu\-la\-tion (stemming from KTAP) and two macroscopic reaction and reaction--diffusion equations for the chemical compounds. The cell population will be treated as a system of active particles, meanwhile macroscopic models are used for the chemicals. At this point, the aforesaid improvements to the model in \cite{KelSur} are introduced, including a reaction term which takes into account the balance mass of the compounds due to the chemical reactions produced in the cell surface.

We describe the cell population by means of a standard distribution function $f(t,x,v,y)$ depending on time $t$, space $x$, velocity $v$ and \textit{activity} $y$ (which will be described below), verifying the following equation deduced in \cite{KelSur},
\begin{equation} \frac{\partial f}{\partial t} + v\cdot \nabla _x f + \nabla _y \cdot (G(y, \bar{Q}, L)f)= \mathcal{H}(f,Q)+ \mathcal{L}(f) + \mathcal{C}(f,L),
\label{eqnf}
\end{equation}
where the right--hand side models the cell mobility by way of velocity changes and the $y$--divergence term is related to the cell membrane reactions.
Concretely,
\begin{itemize}
\item the term $\mathcal{H}$, modeling haptotaxis, is
\begin{align*}
\hspace{-0,6 cm} \mathcal{H} (f,Q)(t,x,v,y)& := \! \int_V \int_{\mathbb{S}^{n-1}} p_h(t,x,v',y)\psi(v; v', \theta)f(t,x,v',y)Q(t,x,\theta ) d\theta dv' \\
& - p_h(t,x,v,y)\ f(t,x,v,y) \int_V \int_{\mathbb{S}^{n-1}} \psi(v'; v, \theta)Q(t,x,\theta ) d\theta dv'  ;
\end{align*}
\item the turning operator $\mathcal{L}$ models random changes in velocity,
\begin{align*}
\mathcal{L}(f)(t,x,v,y) & := \int_V p_l(t,x,v',y)\alpha_1(y)T(v,v')f(t,x,v',y)dv' \\
& - \ p_l(t,x,v,y)\alpha_1(y)f(t,x,v,y)\int_V T(v',v) dv';
\end{align*}
\item and the chemotactic term, $\mathcal{C}$, reads
\begin{align*}
\mathcal{C} (f,L)(t,x,v,y) & := \int_V p_c(t,x,v',y)\alpha_2(y)K[\nabla L](v,v')f(t,x,v',y)dv' \\
& - \ p_c(t,x,v,y)\alpha_2(y)f(t,x,v,y) \int_V K[\nabla L](v',v) dv'.
\end{align*}
\end{itemize}
Here $p_h, p_l$ and $p_c$ are the interaction frequencies, $\psi $, $T$ and $K$ are the interaction kernels, and $\alpha_i$ are nonnegative weight functions satisfying $\alpha_1+\alpha_2=1$.

In order to define the activity $y$ and the cell membrane reaction terms, we need to recover the law of mass action of the reactions produced at the cell membrane involving the  two chemicals in the ECM and the receptors on the cell,
\begin{equation}
\bar{Q} + R  \overset{k_1}{\underset{k_{-1}}{\rightleftharpoons} }\bar{Q}R,
\qquad L + R
\overset{k_2}{\underset{k_{-2}}{\rightleftharpoons}} LR, \label{LAM} \end{equation}
where $R$ stands for the free enzyme on the cell surface and $\bar{Q}R$ and $LR$ represent the respective complexes once the enzyme binds the ECM chemical. Then, $y$ is defined as the two--component vector of microscopic concentrations of the two cell--membrane compounds $\bar{Q}R$ and $LR$, respectively. It is defined in the set
$$Y=\{(y_1,y_2) \in (0,R_0)\times (0,R_0) \ : \  y_1 +y_2 <R_0\},$$
where $R_0 > 0$ represents the maximum concentration of receptors on the cell surface. The function $G$ is given by the expression
\begin{equation*}
G(y,q,l):=\begin{pmatrix}
k_1(R_0-y_1-y_2)q-k_{-1}y_1 \\ k_2(R_0-y_1-y_2)l-k_{-2}y_2  \end{pmatrix},\label{G}
\end{equation*}
whose rows represent the equations associated with (\ref{LAM}).

Now, we introduce the macroscopic equations for the free chemicals $Q$ and $L$ in the ECM:
\begin{align}
\frac{\partial Q}{\partial t} = & -\kappa \left( \int _V \int _Y \left( 1 - \left| \theta \cdot \frac{v}{|v|} \right| \right) f dvdy \right) Q \nonumber \\
& -k_1Q\int _V \int _Y (R_0-y_1-y_2)fdvdy + \frac{k_{-1}}{|\mathbb{S}^{n-1}|} \int _V \int _Y y_1fdvdy, \label{eqnQ}
\end{align}
and
\begin{align}
\frac{\partial L}{\partial t} = & \kappa \int _{\mathbb{S}^{n-1}}\left( \int _V \int _Y \left( 1-\left| \theta \cdot \frac{v}{|v|} \right| \right) f dvdy \right) Q d\theta - r_L L + D_L \Delta _x L \nonumber \\
 & -k_2L\int _V \int _Y (R_0-y_1-y_2)fdvdy + k_{-2} \int _V \int _Y y_2fdvdy. \label{eqnL}
\end{align}
We identify here the models deduced in \cite{KelSur,Hillen}, where the first term represents, in both equations, the production of chemical $L$ by degradation of the fiber $Q$ after interaction with a cell. Also, decay and diffusion of chemical $L$ with rate $r_L$ can be observed. Finally, the two last reaction  terms are introduced in this paper, with the aim of adding the mass balance due to the cell membrane interactions, which completes the previous model.

Now, we are interested in the well--posedness of the whole system (\ref{eqnf}), (\ref{eqnQ}), and (\ref{eqnL}) in an adequate space.

\section{Existence and uniqueness of solution}

We start by introducing the sets in which the variables are defined. The space variable can be considered in the whole  $\mathbb{R}^n $ for some $n\geq 1$, meanwhile the activity $y$ is in the set $Y$ described above. Finally, the velocity $v$ will be defined in $V=[s_1, s_2] \times \mathbb{S}^{n-1}$ with $0\leq s_1 < s_2 < \infty$.
Now, we recall some useful estimates that can be found in \cite{KelSur}.

\begin{lema}[Properties of integral operators] Let $T_0>0$ and $0\leq t \leq T_0 $. Then, the following properties hold:
 \label{IntOp}

\begin{enumerate}
\item Let $p_h(t) \in L^{\infty}(\mathbb{R}^n \times V \times Y)$ and  $\psi(v; v', \theta)$ be non-negative functions verifying:
\begin{equation} \int _V \psi (v; v', \theta) dv =1, \ \ \ \ \int _V \psi(v; v', \theta) dv' \leq M . \label{KerH} \end{equation}
Then the integral operator $\mathcal{H}$ is a continuous bilinear mapping from \\ $L^p(\mathbb{R}^n \times V \times Y) \times L^\infty (\mathbb{R}^n \times \mathbb{S}^{n-1})$ to $L^p(\mathbb{R}^n \times V \times Y)$ $(p=1,\infty )$, verifying
$$\| \mathcal{H}(f(t), Q(t)) \| _p \leq C\| p_h(t) \|_\infty \| \bar{Q}(t) \| _\infty \| f(t) \| _p. $$
Furthermore, if $Q(t) \in L^1 (\mathbb{R}^n \times \mathbb{S}^{n-1})$, then
$$\| \mathcal{H}(f(t), Q(t)) \| _1 \leq C\| p_h(t) \|_\infty \| \bar{Q}(t) \| _1 \| f(t) \| _\infty.$$
\item Assume that $0\leq p_l(t) \in L^\infty(\mathbb{R}^n \times V \times Y)$,  $\alpha _1 \in L^\infty(Y)$ and $T(v,v')$ are given functions such that:
\begin{equation} \int _V T(v,v')dv=1; \ \ \ \ |T(\cdot, v)| \leq C|v| . \label{KerL} \end{equation}
Then, the integral operator $\mathcal{L} $ is a continuous mapping from $L^p(\mathbb{R}^n \times V \times Y)$ to $L^p(\mathbb{R}^n \times V \times Y)$ $(p=1,\infty )$, and the inequality
$$ \| \mathcal{L}(f(t)) \| _p \leq 2\| p_l (t) \| _\infty \| f(t) \| _p $$
holds.

\item Assume that $0\leq p_c(t) \in L^\infty(\mathbb{R}^n \times V \times Y)$, $\alpha _2 \in L^\infty(Y)$ and $K[F](v,v')$ are given functions such that:
\begin{equation} \int _V K(v,v')dv=1, \ \ \ \ |K(\cdot, v)| \leq C|v|, \ \ \ \ |K[F] - K[G]| \leq C|F-G|. \label{KerC} \end{equation}
Then, the integral operator $\mathcal{C} $ is a continuous mapping from \\ $L^p(\mathbb{R}^n \times V \times Y) \times L^\infty (\mathbb{R}^n)$ to $L^p(\mathbb{R}^n \times V \times Y)$ $(p=1,\infty )$, verifying
$$ \| \mathcal{C}(f(t), L(t)) \| _p \leq 2M\| p_c (t) \| _\infty \| f(t) \| _p. $$
\end{enumerate}
\end{lema}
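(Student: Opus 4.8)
The three operators share a common gain--minus--loss structure, so my plan is to estimate the two summands of each separately and recombine, treating the $L^1$ and $L^\infty$ exponents by different arguments. Throughout I would fix $t$ and work first at the level of the integrands, noting that $\alpha_1+\alpha_2=1$ with $\alpha_i\ge 0$ forces $\|\alpha_i\|_\infty\le 1$, so the weights can be discarded at the cost of a harmless factor, and that all constants produced below are independent of $t$, which is what makes the continuity statements hold uniformly on $[0,T_0]$.

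For the $L^\infty$ bounds I would argue pointwise. In every \emph{loss} term the inner integral is exactly one of the \emph{normalised} kernel integrals, since each kernel integrates to one in its first slot: $\int_V \psi(v';v,\theta)\,dv'=1$, $\int_V T(v',v)\,dv'=1$, $\int_V K(v',v)\,dv'=1$. Hence, after the trivial angular integration $\int_{\mathbb{S}^{n-1}}Q\,d\theta=\bar{Q}$ in the haptotaxis case, each loss term is dominated by $\|p\|_\infty\|\bar{Q}\|_\infty|f|$, respectively by $\|p\|_\infty|f|$. The \emph{gain} terms are the delicate point, because there one must integrate the kernel over its \emph{second} slot, i.e. over the incoming velocity $v'$, which is not the normalised direction. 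This is where the second hypothesis on each kernel enters: $\int_V\psi(v;v',\theta)\,dv'\le M$ supplies the factor $M$ for $\mathcal{H}$ directly, while for $\mathcal{L}$ and $\mathcal{C}$ the bounds $|T(\cdot,v)|\le C|v|$ and $|K(\cdot,v)|\le C|v|$, combined with $|v|\le s_2$ and $|V|<\infty$ (recall $V=[s_1,s_2]\times\mathbb{S}^{n-1}$), show that $\int_V T(v,v')\,dv'$ and $\int_V K(v,v')\,dv'$ are bounded by a finite constant. Summing gain and loss then yields the stated bounds up to the explicit multiplicative constants.

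For the $L^1$ bounds I would integrate each summand over $(x,v,y)$ and apply Tonelli to choose the order of integration advantageously. In every gain term I integrate the \emph{output} velocity $v$ first, so that the normalisation $\int_V\psi(v;v',\theta)\,dv=1$ (and the analogues for $T$, $K$) removes the kernel; what remains is $\int_{\mathbb{S}^{n-1}}Q\,d\theta=\bar{Q}\le\|\bar{Q}\|_\infty$ for $\mathcal{H}$, times $\|p\|_\infty\|f\|_1$. In each loss term the $v'$--integral has already collapsed the kernel to $1$, leaving the same type of bound. For the \emph{furthermore} claim the pairing is reversed: bounding $f$ by $\|f\|_\infty$ and using the normalisations leaves integrals of $Q(\theta)$ against factors constant in $v,v',y$; since $Q$ is independent of those variables and both $V$ and $Y$ have finite measure, these integrations produce the factor $|V|\,|Y|$ together with the residual $\int_{\mathbb{R}^n}\int_{\mathbb{S}^{n-1}}Q\,d\theta\,dx=\|\bar{Q}\|_1$, which is precisely the asserted estimate.

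The main obstacle is exactly the $L^\infty$ control of the gain terms: unlike the loss terms, they integrate each kernel over the \emph{non--normalised} velocity argument, so the pointwise bound genuinely requires the second kernel hypotheses ($M$ for $\psi$, and $|T(\cdot,v)|,|K(\cdot,v)|\le C|v|$ for the turning and chemotactic kernels) together with the finiteness of $V$ and $\mathbb{S}^{n-1}$. By contrast the $L^1$ estimates are essentially automatic once Tonelli is invoked to integrate out the normalised variable, and the only bookkeeping needed there is to ensure that no constant depends on $t$.
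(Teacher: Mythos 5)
The paper offers no proof of this lemma to compare against: it is explicitly \emph{recalled} from \cite{KelSur}, so the only benchmark is the statement itself. Your argument is the standard one and is essentially correct. In particular you identify the genuinely non-trivial point: the loss terms integrate each kernel over its normalised (first) slot and collapse to $1$, whereas the gain terms in the $L^\infty$ estimate integrate over the second slot and therefore need the extra hypotheses ($\int_V\psi(v;v',\theta)\,dv'\le M$, respectively $|T(\cdot,v)|,|K(\cdot,v)|\le C|v|$ together with $|V|<\infty$); and for $\mathcal{H}$ you correctly note that one must integrate $\psi$ over $v'$ \emph{before} integrating $Q$ over $\theta$, since the stated bound involves $\|\bar Q\|_\infty$ and not $\|Q\|_\infty$. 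Your Tonelli argument for $p=1$ and for the ``furthermore'' estimate (where $|V|\,|Y|\,\|\bar Q\|_1$ appears) is also the right computation. Two small caveats. First, the hypotheses of the lemma as written only assume $\alpha_1,\alpha_2\in L^\infty(Y)$; your reduction $\|\alpha_i\|_\infty\le 1$ imports the normalisation $\alpha_1+\alpha_2=1$, $\alpha_i\ge 0$ from the modelling section, which is legitimate in context but should be said. Second, for $p=\infty$ your gain-term bounds for $\mathcal{L}$ and $\mathcal{C}$ yield the constant $1+Cs_2|V|$ rather than the stated $2$ (resp.\ $2M$), because $|T(\cdot,v)|\le C|v|$ only gives $\int_V T(v,v')\,dv'\le Cs_2|V|$, not $\le 1$; the continuity assertions are unaffected, and you flag this honestly with ``up to the explicit multiplicative constants'', but it is worth recording that the exact constants in the statement are only automatic for $p=1$.
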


To carry on the proof of existence and uniqueness of solution to the system (\ref{eqnf})-(\ref{eqnQ})-(\ref{eqnL}), we first add suitable initial conditions to set up a Cauchy problem: $f(t=0)=f_0$,  $Q(t=0)=Q_0$, $L(t=0)=L_0$. We will follow analogous techniques to those developed in \cite{KelSur}.

The first step is uncoupling the equations, replacing with a given non-negative function on each equation:
\begin{align}
f^* \in & \  L^\infty (0,T_0; L^1(\mathbb{R}^n \times V \times Y) \cap L^\infty (\mathbb{R}^n \times V \times Y) ), \nonumber \\
Q^* \in & \  L^\infty (0,T_0; L^1(\mathbb{R}^n \times \mathbb{S}^{n-1})\cap L^\infty(\mathbb{R}^n \times \mathbb{S}^{n-1})), \label{fest} \\
L^* \in & \  L^\infty (0,T_0; W^{1,1}(\mathbb{R}^n) \cap L^\infty (\mathbb{R}^n) ). \nonumber
\end{align}
The uncoupled system reads:

\begin{equation} \frac{\partial f}{\partial t} + v\cdot \nabla _x f + \nabla _y \cdot (G(y, \bar{Q}^*, L^*) f) = \mathcal{H}(f,Q^*)+ \mathcal{L}(f) + \mathcal{C}(f, L^*) + g,\label{flineal} \end{equation}
\begin{align}
\frac{\partial Q}{\partial t} = & -\kappa \left( \int _V \int _Y \left( 1- \left| \theta \cdot \frac{v}{|v|} \right| \right) f^* dvdy \right) Q \nonumber \\
& -k_1Q\int _V \int _Y (R_0-y_1-y_2)f^*dvdy + \frac{k_{-1}}{|\mathbb{S}^{n-1}|} \int _V \int _Y y_1f^*dvdy + h, \label{Qlineal} \\
\frac{\partial L}{\partial t} = & \, \kappa \int _{\mathbb{S}^{n-1}}\left( \int _V \int _Y \left( \left| 1- \theta \cdot \frac{v}{|v|} \right| \right) f^* dvdy \right) Q^* d\theta - r_L L + D_L \Delta _x L  \nonumber \\
 & -k_2L\int _V \int _Y (R_0-y_1-y_2)f^*dvdy + k_{-2} \int _V \int _Y y_2f^*dvdy, \label{Llineal}
\end{align}
where $g(t,x,v,y)$ and $h(t,x,\theta)$ are two additional functions.

We start with the following result for equation (\ref{flineal}).
\begin{teorema}
(See  \cite[Theorem 1]{KelSur}) \label{existf}
Let $f_0 \in L^1(\mathbb{R}^n \times V \times Y) \cap L^\infty (\mathbb{R}^n \times V \times Y) $ be a non-negative function, $g\in L^1 (0,T_0; (L^1\cap L^\infty)(\mathbb{R}^n \times V \times Y)) $ and consider $Q^\ast$ and $L^\ast$ verifying (\ref{fest}).
Also assume that
\begin{enumerate}[(i)]
\item $f_0 \in L^\infty (\mathbb{R}^n \times V; L^1(Y)) \cap L^\infty (\mathbb{R}^n \times V; W^{1,\infty}(Y)) \cap L^1 (\mathbb{R}^n \times V; W^{1,1}(Y)).$
\item $p_c, p_h, \nabla _yp_h $ and $\nabla _y p_c $ belong to $L^\infty (0,T_0; L^\infty (\mathbb{R}^n \times V \times Y)) $.
\item $\nabla _y \alpha _1, \nabla _y \alpha _2 $ are bounded.
\end{enumerate}
Then, under the hypothesis of Lemma \ref{IntOp}, there exists an unique weak solution $f$ to Eq. (\ref{flineal}) with initial condition $f_0$. Also, $f$ verifies 
\begin{equation}
\|f(t) \|_p \leq \left( \|f_0 \|_p + \int _0 ^{T_0} \|g(\tau ) \|_p d\tau \right) (1+Cte^{Ct})
\label{desf}
\end{equation}
$(p=1, \infty )$, where $C$ is a positive constant depending on $\| Q^*\| _\infty$ and $\| L^* \|_\infty$.
Furthermore, if $g\equiv 0$, then
$$\| \nabla _y f(t) \| _\infty \leq \left(\| \nabla _y f(0) \| _\infty + C\| f_0 \| _\infty (T_0+CT_0^2e^{CT_0}) \right)e^{CT_0}.  $$
\end{teorema}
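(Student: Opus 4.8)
The plan is to treat equation (\ref{flineal}) as a linear transport equation perturbed by the bounded integral operators $\mathcal{H}(\cdot,Q^*)$, $\mathcal{L}$ and $\mathcal{C}(\cdot,L^*)$, and to solve it by combining the method of characteristics with a fixed--point argument in the mild (Duhamel) formulation. First I would rewrite the activity term in non--conservative form, $\nabla_y\cdot(Gf)=G\cdot\nabla_y f+(\nabla_y\cdot G)\,f$, so that the principal part becomes the transport operator associated with the characteristic system $\dot X=v$, $\dot V=0$, $\dot Y=G(Y,\bar{Q}^*,L^*)$. A preliminary point is to verify that this flow is well defined and that the set $Y$ is invariant: on $\{y_1=0\}$ and $\{y_2=0\}$ the corresponding components of $G$ are non--negative, while on $\{y_1+y_2=R_0\}$ one has $\dot Y_1+\dot Y_2=-k_{-1}y_1-k_{-2}y_2\le 0$, so the field points inward and no characteristic can leave $Y$. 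Since $\bar{Q}^*,L^*\in L^\infty$, the divergence $\nabla_y\cdot G=-(k_1\bar{Q}^*+k_2 L^*+k_{-1}+k_{-2})$ is bounded, which is what will control the growth of the associated propagator.

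Denoting by $U(t,\tau)$ the evolution family generated by this transport operator (an evolution family rather than a semigroup, since the coefficients depend on $t$ through $Q^*$ and $L^*$), the mild formulation reads
\begin{equation*}
f(t)=U(t,0)f_0+\int_0^t U(t,\tau)\big(\mathcal{H}(f,Q^*)+\mathcal{L}(f)+\mathcal{C}(f,L^*)+g\big)(\tau)\,d\tau .
\end{equation*}
By Lemma \ref{IntOp} the map $f\mapsto\mathcal{H}(f,Q^*)+\mathcal{L}(f)+\mathcal{C}(f,L^*)$ is bounded on $L^1\cap L^\infty$, uniformly in $t$, with norm controlled by $\|p_h\|_\infty\|\bar{Q}^*\|_\infty$, $\|p_l\|_\infty$ and $\|p_c\|_\infty$. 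Existence and uniqueness of a weak solution then follow from the Banach fixed--point theorem applied to the right--hand side on $C([0,T_0];L^1\cap L^\infty)$, either on a short interval followed by iteration, or in one step with a norm weighted by $e^{-\lambda t}$; the boundedness of $U(t,\tau)$ on $L^p$, with at most exponential growth governed by the bounded term $-(\nabla_y\cdot G)f$, is what makes the map a genuine contraction.

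For the quantitative bound (\ref{desf}) I would take $L^p$ norms in the mild formula, use $\|U(t,\tau)h\|_p\le e^{C(t-\tau)}\|h\|_p$ together with the operator bounds of Lemma \ref{IntOp}, and close the resulting integral inequality by Gronwall's lemma; the prefactor $(1+Cte^{Ct})$ is exactly what this iteration produces, with $C$ depending on $\|Q^*\|_\infty$ and $\|L^*\|_\infty$ as claimed. The estimate on $\nabla_y f$ when $g\equiv 0$ is then obtained by differentiating the equation in $y$: setting $w=\nabla_y f$, its components satisfy the same transport equation with extra sources that are linear in $w$ with bounded coefficients (coming from $\nabla_y G$), proportional to $f$ (coming from $\nabla_y(\nabla_y\cdot G)$), plus the $y$--derivatives of the integral operators acting on $f$. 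These last contributions do not vanish because $p_h,p_c,\alpha_1,\alpha_2$ depend on $y$, and bounding them is the main obstacle: this is precisely where hypotheses (ii) and (iii) are used, controlling $\nabla_y p_h,\nabla_y p_c,\nabla_y\alpha_1,\nabla_y\alpha_2$ so that every new source is dominated by $C\|f\|_\infty$. Inserting the $L^\infty$ bound for $f$ already furnished by (\ref{desf}) and applying Gronwall once more yields the stated inequality, the polynomial prefactor $(T_0+CT_0^2e^{CT_0})$ arising from integrating this source in time.
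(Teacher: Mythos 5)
The paper itself gives no proof of this theorem: it is imported verbatim from \cite[Theorem 1]{KelSur}, so there is no in--paper argument to compare against. Your reconstruction --- rewriting $\nabla_y\cdot(Gf)$ as transport along $\dot Y=G$ plus the bounded zero--order term $(\nabla_y\cdot G)f$, checking that the field points inward on $\partial Y$ so the activity domain is invariant, and then running the Duhamel formula, a Banach fixed point using the operator bounds of Lemma \ref{IntOp}, and Gronwall for (\ref{desf}) and for the $y$--gradient --- is the standard route for this kind of linear kinetic equation and is consistent with how the result is obtained in \cite{KelSur}; the prefactors $(1+Cte^{Ct})$ and $(T_0+CT_0^2e^{CT_0})$ do come out exactly as you describe. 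Two small remarks. First, $\nabla_y\cdot G=-(k_1\bar Q^*+k_2L^*+k_{-1}+k_{-2})$ does not depend on $y$, so the source you attribute to $\nabla_y(\nabla_y\cdot G)$ vanishes identically; the only genuine extra sources in the equation for $w=\nabla_y f$ are those coming from $\nabla_y G\cdot w$ and from $\nabla_yp_h,\nabla_yp_c,\nabla_y\alpha_1,\nabla_y\alpha_2$, which is indeed where hypotheses (ii)--(iii) enter. Second, the coefficients of the $y$--characteristic ODE are only $L^\infty$ in $t$ (through $\bar Q^*(t,x+tv)$ and $L^*(t,x+tv)$), so the flow must be understood in the Carath\'eodory sense; this is harmless because $G$ is affine, hence uniformly Lipschitz, in $y$, but it is worth stating when claiming the evolution family $U(t,\tau)$ is well defined.
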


Now we prove the next result for the equation to $Q$.
\begin{teorema}
 Let $Q_0,\, h(t) \in L^1(\mathbb{R} ^n \times \mathbb{S} ^{n-1} ) \cap L^\infty (\mathbb{R} ^n \times \mathbb{S} ^{n-1} ) $, $Q_0\geq 0$ and $f^*$ as in (\ref{fest}). Then, there exists an unique function $Q(t) \in L^1(\mathbb{R} ^n \times \mathbb{S} ^{n-1} ) \cap L^\infty (\mathbb{R} ^n \times \mathbb{S} ^{n-1} )$ that solves (\ref{Qlineal}) with initial condition $Q(t=0)=Q_0$. Furthermore, $\forall t \in [0,T_0]$ this solution verifies:
 \begin{equation}
 \| Q(t) \|_p \leq  \| Q_0 \| _p + \int _0 ^{T_0} \| h(\tau ) \| _p d\tau+ C\int _0 ^{T_0} \| \rho ^* (\tau )\| _p d\tau,  \label{desQ}
 \end{equation}
 $(p=1, \infty)$, where
 $$ \rho ^* (t,x):=\int_V \int_Y f^*(t,x,v,y) dvdy. $$
Moreover, if $h=0$, then $Q(t) \geq 0 $.
\label{existQ}
\end{teorema}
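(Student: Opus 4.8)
The plan is to exploit the fact that equation (\ref{Qlineal}) contains no spatial or angular derivatives of $Q$: for each frozen $(x,\theta)$ it is merely a linear, non-autonomous, first-order ordinary differential equation in time. First I would abbreviate the coefficients built from $f^*$,
$$a(t,x,\theta):=\kappa\int_V\int_Y\Big(1-\big|\theta\cdot\tfrac{v}{|v|}\big|\Big)f^*\,dvdy,\qquad b(t,x):=k_1\int_V\int_Y(R_0-y_1-y_2)f^*\,dvdy,$$
and $c(t,x):=\tfrac{k_{-1}}{|\mathbb{S}^{n-1}|}\int_V\int_Y y_1 f^*\,dvdy$, so that (\ref{Qlineal}) reads $\partial_t Q=-(a+b)\,Q+c+h$. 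Since $f^*\geq 0$, since $|\theta\cdot v/|v||\leq 1$, and since $(y_1,y_2)\in Y$ forces $y_1,y_2>0$ and $R_0-y_1-y_2>0$, all three coefficients are non-negative; moreover each integrand is dominated by $R_0$ or by $1$ times $f^*$, so the hypothesis $f^*\in L^\infty(0,T_0;L^1\cap L^\infty)$ guarantees that $a,b,c$ are finite and measurable in $(t,x,\theta)$.

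Next I would solve the ODE explicitly by the integrating-factor (Duhamel) method. Setting $\Lambda(t,x,\theta):=\int_0^t(a+b)(s,x,\theta)\,ds\geq 0$, the unique solution is
$$Q(t,x,\theta)=e^{-\Lambda(t)}\,Q_0+\int_0^t e^{-(\Lambda(t)-\Lambda(s))}\big(c(s)+h(s)\big)\,ds,$$
with the dependence on $(x,\theta)$ suppressed. This representation yields existence and uniqueness pointwise in $(x,\theta)$; uniqueness in the function space then follows because the difference of two solutions sharing the data solves the homogeneous equation with zero initial value and hence vanishes identically.

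To obtain the quantitative bound (\ref{desQ}) I would use that $\Lambda(t)-\Lambda(s)\geq 0$ for $s\leq t$, so every exponential factor is $\leq 1$; taking absolute values gives the pointwise estimate $|Q(t)|\leq |Q_0|+\int_0^t\big(|c(s)|+|h(s)|\big)\,ds$. Applying the $L^p(\mathbb{R}^n\times\mathbb{S}^{n-1})$ norm ($p=1,\infty$), moving it inside the time integral by Minkowski's integral inequality, and bounding $|c(t,x)|\leq \tfrac{k_{-1}R_0}{|\mathbb{S}^{n-1}|}\rho^*(t,x)$ (using $y_1<R_0$) produces $\|c(t)\|_p\leq C\|\rho^*(t)\|_p$, which is exactly (\ref{desQ}). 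Finally, when $h=0$ the representation formula exhibits $Q$ as a sum of products of non-negative quantities ($e^{-\Lambda}\geq 0$, $Q_0\geq 0$, $c\geq 0$), so $Q(t)\geq 0$.

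The point requiring care—more bookkeeping than genuine obstacle—is checking that the coefficients inherit enough integrability from the hypotheses on $f^*$ so that the pointwise construction indeed lands in $L^1\cap L^\infty$ with the stated norm control; in particular one must track the $\theta$-independence of $b$ and $c$ when passing between the $L^1$ and $L^\infty$ norms on $\mathbb{R}^n\times\mathbb{S}^{n-1}$. Everything else is standard linear-ODE machinery, so no fixed-point or compactness argument is needed here, in contrast with the fully coupled system.
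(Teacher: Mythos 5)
Your argument is correct and follows essentially the same route as the paper: freeze $(x,\theta)$, write the explicit Duhamel/integrating-factor formula for the linear ODE, use that the exponential factors are $\leq 1$ because the decay coefficient is non-negative, and bound the source term $c$ by $C\rho^*$ to get (\ref{desQ}), with positivity read off from the representation when $h=0$. Your closing remark about tracking the $\theta$-independence of the coefficients when passing between the $L^1$ and $L^\infty$ norms is a fair point of care that the paper itself glosses over, but it does not change the argument.
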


\begin{proof}
Existence and uniqueness of solution is straightforward, since (\ref{Qlineal}) is a linear differential equation.
Actually, the solution can be written as
\begin{equation}\label{aux99}
Q(t)= e^{\int _0 ^t J(\tau)d\tau } Q_0  + \int _0 ^t e^{\int _s ^t J(\tau)d\tau} \left( \frac{k_{-1}}{|S^{n-1}|}\int_V \int _Y  y_1 f ^* (s) dvdy + h(s) \right)ds,
\end{equation}
where the function $J$ is given by
$$
J(t,x,\theta) := -\kappa \int _V \int _Y \left( 1 - \left| \theta \cdot \frac{v}{|v|}  \right| \right) f^* dvdy  - k_1\int_V \int_Y (R_0-y_1-y_2)f^* dvdy.
$$
Taking now absolute values in (\ref{aux99}), we deduce that
\[
|Q(t) | \leq e^{\int _0 ^t J(\tau)d\tau} |Q_0 | + \int _0 ^t  e^{\int _s ^t J(\tau)d\tau}\left(
\frac{k_{-1}}{|S^{n-1}|}\int_V \int _Y  y_1 f ^* (s) dvdy + | h(s)|\right)  ds.
\]
Finally, using that $J$ is non positive, we find (\ref{desQ}) with $C=k_{-1}R_0 /|\mathbb{S}^{n-1}|$.
\end{proof}
\medskip


The equation for $L$ is a linear perturbation of the heat equation. A classic result leads to
\begin{teorema}
Let $f^*$ and $Q^*$ satisfy (\ref{fest}). Then, there exists an unique non--negative function $L\in L^\infty (\mathbb{R}^n) $ that solves (\ref{Llineal}) with initial condition  $L(t=0)=0$. Furthermore, for $p=1, \infty$, we have
\begin{equation} \| L(t) \|_p \leq C\left( \| f^*(t) \| _\infty \| Q^*(t) \| _p + \| \rho ^*(t) \| _p \right), \label{desL} \end{equation}
$$ \| \nabla L(t) \|_1 \leq C\left( \| f^*(t) \| _\infty \| Q^*(t) \| _1 + \| \rho ^*(t) \| _1 \right). $$
\label{existL}
\end{teorema}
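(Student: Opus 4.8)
The plan is to recast (\ref{Llineal}) as an inhomogeneous heat equation and solve it through the variation--of--constants formula for the heat semigroup $\{e^{\tau D_L\Delta}\}_{\tau\ge 0}$. First I would group the zeroth--order coefficient and the source by setting
$$
c(t,x):=r_L+k_2\int_V\int_Y (R_0-y_1-y_2)\,f^*\,dv\,dy,
$$
$$
S(t,x):=\kappa\int_{\mathbb{S}^{n-1}}\Big(\int_V\int_Y\big|1-\theta\cdot\tfrac{v}{|v|}\big|\,f^*\,dv\,dy\Big)Q^*\,d\theta+k_{-2}\int_V\int_Y y_2\,f^*\,dv\,dy,
$$
so that (\ref{Llineal}) reads $\partial_t L=D_L\Delta_x L-c\,L+S$ with $L(0)=0$. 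The two structural facts I would rely on are: since $f^*\ge 0$ and $0<y_1+y_2<R_0$ on $Y$, both $c$ and $S$ are non--negative; and by the bounds $R_0-y_1-y_2<R_0$, $y_2<R_0$, $|1-\theta\cdot v/|v||\le 2$ together with (\ref{fest}), both $c$ and $S$ lie in $L^\infty(0,T_0;L^1\cap L^\infty)$, with in particular $\|c(t)\|_\infty\le r_L+k_2R_0|V||Y|\,\|f^*(t)\|_\infty$.

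For existence and uniqueness I would write the mild formulation
$$
L(t)=\int_0^t e^{(t-s)D_L\Delta}\big(S(s)-c(s)\,L(s)\big)\,ds,
$$
and run a fixed--point argument in $C([0,\tau_0];L^p)$. Since $e^{\tau D_L\Delta}$ is a contraction on $L^p$ and $\|c\|_\infty$ is bounded, the map $L\mapsto\int_0^t e^{(t-s)D_L\Delta}(S-cL)\,ds$ is a contraction for $\tau_0$ small (depending only on $\|c\|_\infty$), and linearity lets me iterate on consecutive subintervals to cover $[0,T_0]$; uniqueness follows from the same contraction estimate. Non--negativity of $L$ I would obtain from the maximum principle: because $S\ge 0$, $L(0)=0$ and the potential $c\ge 0$ is a dissipative (killing) term, the parabolic fundamental solution of $\partial_t-D_L\Delta+c$ is non--negative, whence $L\ge 0$; equivalently, the Picard iterates of the mild formulation stay non--negative because $e^{\tau D_L\Delta}$ is positivity preserving and one starts from $S\ge 0$.

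The estimate (\ref{desL}) I would split into a source bound and a dissipation argument. For the source, bounding $|1-\theta\cdot v/|v||\le 2$ and $y_2<R_0$ gives pointwise $S(t,x)\le 2\kappa|V||Y|\,\|f^*(t)\|_\infty\int_{\mathbb{S}^{n-1}}|Q^*(t,x,\theta)|\,d\theta+k_{-2}R_0\,\rho^*(t,x)$; taking the $L^p_x$ norm and using Minkowski's integral inequality in $\theta$ yields $\|S(t)\|_p\le C(\|f^*(t)\|_\infty\|Q^*(t)\|_p+\|\rho^*(t)\|_p)$ for $p=1,\infty$. To pass this to $L$ while avoiding any growth from the potential, I would exploit $c\ge 0$: for $p=1$, using $L\ge 0$ and $\int_{\mathbb{R}^n}\Delta_x L\,dx=0$, the equation integrates to $\frac{d}{dt}\|L(t)\|_1=-\int cL+\int S\le\|S(t)\|_1$; for $p=\infty$ the comparison principle gives $\|L(t)\|_\infty\le\int_0^t\|S(s)\|_\infty\,ds$. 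Integrating in time and absorbing the resulting factor of $T_0$, together with the uniform--in--time control of $f^*,Q^*,\rho^*$ from (\ref{fest}), into the constant $C$ produces (\ref{desL}). For the gradient bound I would differentiate the mild formula and use the heat--kernel smoothing estimate $\|\nabla e^{\tau D_L\Delta}g\|_1\le C\tau^{-1/2}\|g\|_1$, so that $\|\nabla L(t)\|_1\le\int_0^t C(t-s)^{-1/2}\big(\|S(s)\|_1+\|c(s)\|_\infty\|L(s)\|_1\big)\,ds$; the singularity $(t-s)^{-1/2}$ is integrable and $\|L(s)\|_1$ is already controlled, giving the stated bound on $\|\nabla L(t)\|_1$.

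The main obstacle I anticipate is organizing the estimates so that the non--negative potential $c\,L$ genuinely helps rather than forcing a Gronwall loss: the clean bound (\ref{desL}) hinges on using positivity of $L$ and of $c$ to discard the $-\int cL$ term in the $L^1$ identity, and on the $L^\infty$ comparison principle for $p=\infty$, rather than naively moving $cL$ to the right--hand side. The only other delicate point is the gradient estimate, where one must check that the $(t-s)^{-1/2}$ factor coming from the heat semigroup is time--integrable and that $\|c\,L\|_1$ is controlled through the already--established $L^1$ bound; both are routine once the ordering of the steps is fixed.
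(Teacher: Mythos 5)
Your proposal is correct, and in fact it supplies far more than the paper does: the paper's entire ``proof'' of Theorem \ref{existL} is the single sentence that equation (\ref{Llineal}) is a linear perturbation of the heat equation and that ``a classic result'' gives the conclusion. Your decomposition $\partial_t L = D_L\Delta_x L - cL + S$ with $c\ge 0$ and $S\ge 0$, the mild/Duhamel formulation with a contraction in $C([0,\tau_0];L^p)$ iterated over subintervals, the $L^1$ identity $\frac{d}{dt}\|L\|_1=-\int cL+\int S$, the comparison principle for $p=\infty$, and the heat--kernel smoothing bound $\|\nabla e^{\tau D_L\Delta}g\|_1\le C\tau^{-1/2}\|g\|_1$ for the gradient estimate are exactly the standard ingredients the authors are implicitly invoking, and the last of these is genuinely needed since $Q^*$ carries no spatial regularity, so one cannot differentiate the source. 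Two small caveats. First, your parenthetical that the Picard iterates ``stay non--negative'' is not right as stated: the iteration $L\mapsto\int_0^t e^{(t-s)D_L\Delta}(S-cL)\,ds$ does not preserve positivity because of the $-cL$ term; your primary argument (non--negativity of the fundamental solution of $\partial_t-D_L\Delta+c$ for $c\ge 0$, or equivalently testing the equation against $L_-:=\max(-L,0)$) is the correct route and should be the one retained. Second, your time integrations naturally produce $\int_0^t\|S(s)\|_p\,ds$ rather than the time--$t$ values appearing on the right of (\ref{desL}); as you note, this is reconciled only by reading the right--hand side through the $L^\infty$--in--time control of (\ref{fest}) and absorbing $T_0$ into $C$, which is evidently how the authors intend the estimate (compare the time--integrated form of (\ref{desQ})).
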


With these results in mind, we can prove the existence and uniqueness of solution to the full model (\ref{eqnf})--(\ref{eqnQ})--(\ref{eqnL}). We introduce the following functional spaces:
\begin{eqnarray*}
\mathbb{X}_f & := & L^\infty (0,T_0; L^1(\mathbb{R}^n \times V \times Y)\cap L^\infty(\mathbb{R}^n \times V \times Y)), \\
\mathbb{X}_Q & := & L^\infty (0,T_0; L^1(\mathbb{R}^n \times \mathbb{S}^{n-1})\cap L^\infty(\mathbb{R}^n \times \mathbb{S}^{n-1})),\\
\mathbb{X}_L & := & L^\infty (0,T_0; W^{1,1}(\mathbb{R}^n) \cap L^\infty (\mathbb{R}^n) ),\\
\mathbb{X} & := & \mathbb{X}_f \times \mathbb{X}_Q \times \mathbb{X}_L,
\end{eqnarray*}
endowed with their natural norms. We can now give the main result of this section.

\begin{teorema}
Let $0\leq Q_0 \in L^1(\mathbb{R} ^n \times \mathbb{S} ^{n-1} ) \cap L^\infty (\mathbb{R} ^n \times \mathbb{S} ^{n-1} ) $  and $f_0$ be in the conditions of Theorem \ref{existf}. Then, given the initial conditions $f(t=0)=f_0$, $Q(t=0)=Q_0$, $L(t=0)=0 $, there exists
$ T_0>0$ such that the system  (\ref{eqnf})--(\ref{eqnQ})--(\ref{eqnL})  has an unique weak solution $(f,Q,L)\in \mathbb{X} $ defined in $[0,T_0]$. \end{teorema}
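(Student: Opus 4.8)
The plan is to realise the solution as a fixed point of the decoupling map and to invoke the Banach fixed-point theorem. I would define an operator $\Phi:\mathbb{X}\to\mathbb{X}$ sending a triple $(f^*,Q^*,L^*)$ to $(f,Q,L)$, where $f$ is the unique solution of the linear equation (\ref{flineal}) with $g\equiv 0$ furnished by Theorem \ref{existf}, $Q$ solves (\ref{Qlineal}) with $h\equiv 0$ by Theorem \ref{existQ}, and $L$ solves (\ref{Llineal}) by Theorem \ref{existL}. A fixed point of $\Phi$ is exactly a weak solution of the coupled system (\ref{eqnf})--(\ref{eqnQ})--(\ref{eqnL}) with the prescribed data. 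Since $f_0,Q_0\geq 0$ and $L_0=0$, and each of the three linear solves preserves non-negativity (the sign statements in Theorems \ref{existQ} and \ref{existL}, together with the positivity-preserving structure of the turning operators in (\ref{flineal})), I would set $\Phi$ up on the closed positive cone of $\mathbb{X}$; this is not merely cosmetic, since the component theorems require the frozen functions $f^*,Q^*$ to be the non-negative functions introduced in (\ref{fest}).

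For the invariance step, fix $R>0$ and let $B_R$ be the intersection of the cone with the closed ball of radius $R$ in $\mathbb{X}$. The a priori bounds (\ref{desf}), (\ref{desQ}) and (\ref{desL}) control each component of $\Phi(f^*,Q^*,L^*)$ by the initial norms plus a remainder of order $T_0$ times quantities that are bounded on $B_R$ (the constants in (\ref{desf}) depend only on $\|Q^*\|_\infty$ and $\|L^*\|_\infty$, hence on $R$). Choosing $R$ large enough to absorb the data and then $T_0$ small, one obtains $\Phi(B_R)\subset B_R$. Here one uses that Theorem \ref{existL} delivers both the $L^\infty$ and the $\|\nabla L\|_1$ bounds, so that the image genuinely lies in $\mathbb{X}_L=L^\infty(0,T_0;W^{1,1}\cap L^\infty)$.

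For the contraction step, take two inputs and subtract. The difference $f_1-f_2$ solves (\ref{flineal}) with zero initial datum and a source obtained by splitting each nonlinearity into a diagonal part, reabsorbed by the linear estimate, and a cross part proportional to the input differences. Concretely, the bilinearity of $\mathcal{H}$ gives $\mathcal{H}(f_1,Q_1^*)-\mathcal{H}(f_2,Q_2^*)=\mathcal{H}(f_1-f_2,Q_1^*)+\mathcal{H}(f_2,Q_1^*-Q_2^*)$; the Lipschitz bound $|K[F]-K[G]|\leq C|F-G|$ from (\ref{KerC}) turns the chemotactic difference into a contribution carrying a factor $\|\nabla(L_1^*-L_2^*)\|$; and the flux difference is written
\[
G(\cdot,\bar{Q}_1^*,L_1^*)f_1-G(\cdot,\bar{Q}_2^*,L_2^*)f_2=G(\cdot,\bar{Q}_1^*,L_1^*)(f_1-f_2)+\bigl(G(\cdot,\bar{Q}_1^*,L_1^*)-G(\cdot,\bar{Q}_2^*,L_2^*)\bigr)f_2,
\]
where the affine, Lipschitz dependence of $G$ on its last two arguments produces factors $\|\bar{Q}_1^*-\bar{Q}_2^*\|$ and $\|L_1^*-L_2^*\|$. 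Applying (\ref{desf}) with $f_0=0$ then bounds $\|f_1-f_2\|$ by $CT_0$ times the sum of the input differences; the analogous subtractions in (\ref{Qlineal}) and (\ref{Llineal}), estimated via (\ref{desQ}) and (\ref{desL}), close the system. Summing the three estimates, $\Phi$ is Lipschitz on $B_R$ with constant $O(T_0)$, so for $T_0$ small it is a contraction and the Banach fixed-point theorem yields the unique solution $(f,Q,L)\in\mathbb{X}$ on $[0,T_0]$.

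The step I expect to be the main obstacle is the flux-difference term. Its cross part $\bigl(G(\cdot,\bar{Q}_1^*,L_1^*)-G(\cdot,\bar{Q}_2^*,L_2^*)\bigr)f_2$ sits under the operator $\nabla_y\cdot$, so expanding the divergence produces, besides a harmless term involving $\nabla_y\cdot(G_1-G_2)$, a term $(G_1-G_2)\cdot\nabla_y f_2$ that requires control of $\|\nabla_y f_2\|_\infty$. This is supplied precisely by the last inequality of Theorem \ref{existf}, which is available only when the source in the $f$-equation vanishes; this is the technical reason for building $\Phi$ with $g\equiv 0$. The companion subtlety is purely one of bookkeeping: the chemotactic coupling demands the difference $L_1^*-L_2^*$ in a gradient norm, which is matched by the $W^{1,1}$-component of $\mathbb{X}_L$ and the $\|\nabla L\|_1$ estimate of Theorem \ref{existL}, and one must keep this chain of norms consistent across all three equations for the contraction to close.
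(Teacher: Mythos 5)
Your proposal is correct and follows essentially the same route as the paper: the paper runs the Picard iteration for the very map you call $\Phi$ (invariance of a ball $B(R)$ for small $T_0$, then contraction via the same splitting of $\mathcal{H}$, $\mathcal{C}$ and the $\nabla_y\cdot(Gf)$ term into diagonal and cross parts), and concludes with Banach's fixed point theorem exactly as you do. Your identification of the $(G_1-G_2)\cdot\nabla_y f$ term as the reason the $\|\nabla_y f\|_\infty$ bound of Theorem \ref{existf} (valid for $g\equiv 0$) is needed matches the paper's use of that estimate.
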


\begin{proof}
First of all, we construct the sequence $(f^j, Q^j, L^j)$, defined in $[0, T_0]$ as the corresponding solution of (\ref{flineal})--(\ref{Qlineal})--(\ref{Llineal}) with $g,h=0$, initial data $(f_0, Q_0, 0)$, and $(f^\ast, Q^\ast, L^\ast )= (f^{j-1}, Q^{j-1}, L^{j-1})$ for any $j\geq 1$, starting with $f^0=Q^0=L^0=0$.
Using Theorems \ref{existf}, \ref{existQ} and \ref{existL}, it is straightforward to prove that the sequence is well--defined in $\mathbb{X}$. Actually, for a constant $R> 2 \|(f_0,Q_0,0)\|_{\mathbb{X}}$, we can choose  $T_0$  small enough such that the sequence is in the closed ball of radius $R$ (denoted $B(R)$) of the space $\mathbb{X}$.

Our objective is to prove that this sequence converges to a solution of (\ref{eqnf})--(\ref{eqnQ})--(\ref{eqnL}) with initial data $(f_0, Q_0, 0)$. To do that, we study the difference between two consecutive elements of the sequence. First, we note that $(f^{j+1}-f^j)$ satisfies Eq. (\ref{flineal}) with vanishing initial condition, $(Q^\ast, L^\ast)=(Q^{j}, L^{j})$, and
\begin{eqnarray*}
g&:=&\mathcal{H}(f^{j}, Q^{j}-Q^{j-1})  + \mathcal{C}(f^{j},L^{j})-C(f^{j}, L^{j-1}) \\ &&+ \nabla _y \cdot \Big( (G(y,Q^{j-1}, L^{j-1})-G(y,Q^{j},L^{j}))f^{j} \Big).
\end{eqnarray*}
Using Theorem \ref{existf}, Lemma \ref{IntOp}, and the trivial inequality
$$| G(y,q,l) - G(y, \widehat{q}, \hat{l}) | + | \nabla _y \cdot (G(y,q,l) -
G(y, \widehat{q}, \hat{l})) | \leq C(|q-\hat{q}|+|l-\hat{l}|),
$$
we can easily deduce
\begin{eqnarray*}
\| f^{j+1}-f^{j} \| _{L^\infty (0,T_0; L^1(\mathbb{R}^n \times V \times Y))} & \leq & C(R)\Big(\| Q^{j}-Q^{j-1} \| _{L^\infty (0,T_0; L^1(\mathbb{R}^n \times \mathbb{S}^{n-1} ))}\\
& &+ \| L^{j}-L^{j-1} \| _{L^\infty (0,T_0; W^{1,1}(\mathbb{R}^n))}\Big),
\end{eqnarray*}
with $C$ increasingly dependent on $T_0$.

Analogously, for $(Q^{j+1}-Q^{j})$, it can be proven that
\begin{equation*}
\| Q^{j+1}-Q^{j} \| _{L^\infty (0,T_0; L^1(\mathbb{R}^n \times \mathbb{S}^{n-1}))} \leq C(R)\Big(\| f^{j}-f^{j-1} \| _{L^\infty (0,T_0; L^1(\mathbb{R}^n \times V \times Y ))}\Big),
\end{equation*}
by noticing that $(Q^{j+1}-Q^{j})$ solves (\ref{Qlineal}) with $f^\ast=(f^{j}-f^{j-1})$, vanishing initial data, and
\begin{align*}
h := &\ - Q^{j} \int _V \int _Y \left( \kappa \left( 1 - \left| \theta \cdot \frac{v}{|v|} \right| \right) + k_1 (R_0-y_1-y_2)\right) (f^{j}-f^{j-1}) dvdy    \\
 & - (Q^{j+1}-Q^{j}) \int _V \int _Y \left( \kappa \left( 1 - \left| \theta \cdot \frac{v}{|v|} \right| \right) + k_1 (R_0-y_1-y_2)\right)f^{j-1} dvdy.
\end{align*}

{}Finally, for $(L^{j+1}-L^{j})$, a similar argument allows us to find
\begin{eqnarray*}
\| L^{j+1}-L^{j}\| _{L^\infty(0,T_0; W^{1,1}(\mathbb{R}^n))} & \leq & C(R)\Big(\| f^{j}-f^{j-1} \| _{L^\infty (0,T_0; L^1(\mathbb{R}^n \times V \times Y ))} \\
 & & + \| Q^{j}-Q^{j-1} \| _{L^\infty (0,T_0; L^1(\mathbb{R}^n \times \mathbb{S}^{n-1} ))} \Big).\\
\end{eqnarray*}

Analogous estimates can be obtained in $L^\infty$, using the corresponding in\-e\-qua\-li\-ties. We finally get
$$
\| (f^{j+1}-f^{j},Q^{j+1}-Q^{j},L^{j+1}-L^{j}) \| _{\mathbb{X}} \leq C\| (f^{j}-f^{j-1},Q^{j}-Q^{j-1},L^{j}-L^{j-1}) \| _{\mathbb{X}}.
$$
In order to complete the proof, we can choose $T_0$ small enough in such a way that $C<1$, and then conclude the convergence of the sequence to a solution of the system.

To prove the uniqueness, we can consider the operator $\mathcal{T}:\mathbb{X} \rightarrow \mathbb{X}$ as the solution operator of the uncoupled system, i.e., $\mathcal{T}(f^\ast, Q^\ast, L^\ast )$ is the solution of (\ref{flineal})--(\ref{Qlineal})--(\ref{Llineal}) in $[0,T_0]$ with $g,h=0$ and initial data $(f_0, Q_0, 0)$. We note that a fixed point of $\mathcal{T}$ would be a solution in $\mathbb{X}$ of the full system (\ref{eqnf})--(\ref{eqnQ})--(\ref{eqnL}) and vice versa.
{}The previous computations give us the contractivity of $\mathcal{T}$ for $T_0$ small enough, and using Banach's fixed point theorem, the uniqueness of the fixed point of $\mathcal{T}$, thus of the solution of the full system.
\end{proof}

\section{The high-field limit}

In this section we want to study a macroscopic description of the previous model, by means of a suitable hyperbolic limit.
As seen before, haptotaxis and chemotaxis are the key ingredients to describe the evolution of the system, so they are called to retain its influence  in the macroscopic description. Keeping these properties in the limit is a powerful motivation to study the hyperbolic limit in the previous system.
Some references about scaling limits are for example \cite{BBNS3,BBNS1,BBNS2}, for parabolic and hyperbolic limits in a generic system with single/multiple populations modeled by the KTAP, or \cite{HypPar} to understand the connections between parabolic and hyperbolic scales in general kinetic theory.

\subsection{Hyperbolic scaling}

In this section we perform the typical fluid description of a kinetic model by way of a macroscopic limit of hyperbolic type. We note that equations (\ref{eqnQ}) and (\ref{eqnL}) are already macroscopic, so the scale should not change it. In the sequel, the interaction frequencies $p_h$, $p_l$, and $p_c$ are considered to be constant (otherwise, the scaling does not make sense). First of all, we define the dimensionless (``hat"{}) variables:
$$ t:=\hat{t}\tau,\qquad x:=\hat{x}R,\qquad v:=\hat{v}s_2,\qquad y:=\hat{y}R_0, $$
$$ f(t,x,v,y):=\bar f \hat{f}(\hat{t}, \hat{x}, \hat{v}, \hat{y}), \ \ Q(t,x, \theta) := R_0 \hat{Q} (\hat{t}, \hat{x}, \theta), \ \  L(t,x) := R_0 \hat{L} (\hat{t}, \hat{x}),
$$
$$ p_k(t,x,v,y):= \bar p_k, (k=h,l,c), \quad G(t,Q,L):=\bar G \hat{G}(\hat{y}, \hat{Q}, \hat{L}), $$
$$\alpha_j(y):= \hat{\alpha} _j(\hat{y}), (j=1,2), \qquad T(v,v'):=\frac{1}{s_2^n} \hat{T}(\hat{v}, \hat{v}'),  $$
$$
\psi(v;v',\theta) = \frac{1}{R_0 s_2^n}\,  \hat{\psi} (\hat{v};\hat{v}',\theta), \qquad K(v,v'):=\frac{1}{s_2^n} \hat{K}(\hat{v}, \hat{v}'),
$$
where $\tau, R, \bar f, \bar p_k,$ and $\bar G $ are typical quantities of their respective variables. The new variables are defined in the sets
$$ \hat{V}:=\frac{1}{s_2}V, \qquad \hat{Y}:=\frac{1}{R_0}Y. $$
Our system then becomes:
$$
\frac{\partial \hat{f}}{\partial \hat{t}} + \frac{s_2 \tau}{R}\hat{v}\cdot\nabla_{\hat{x}}\hat{f} + \frac{\tau \bar G}{R_0} \nabla_{\hat{y}} \cdot (\hat{G} \hat{f}) = \bar p_h \tau \hat{\mathcal{H}}(\hat{f} ,\hat{Q}) + \bar p_l \tau \hat{\mathcal{L}}(\hat{f} ) + \bar p_c \tau \hat{\mathcal{C}}(\hat{f} , \hat{L}),
$$
\begin{align*}
\frac{\partial \hat{Q}}{\partial \hat{t}} = & \ -\tau R_0^2 s_2^n \bar f \kappa \left( \int _{\hat{V}} \int _{\hat{Y}} \left( 1 - \left| \theta \cdot \frac{\hat{v}}{|\hat{v}|} \right| \right) \hat{f} d\hat{v}d\hat{y} \right) \hat{Q} \\
 & - \tau R_0^3 s_2^n \bar f k_1 \hat{Q}\int _{\hat{V}} \int _{\hat{Y}} (1-\hat{y}_1-\hat{y}_2)\hat{f} d\hat{v}d\hat{y} + \tau R_0^2 s_2^n \bar f \frac{k_{-1}}{\mathbb{S}^{n-1}} \int _{\hat{V}} \int _{\hat{Y}} \hat{y}_1\hat{f} d\hat{v}d\hat{y},  \\
\frac{\partial \hat{L}}{\partial \hat{t}} = & \ \tau R_0^2 s_2^n \bar f \kappa \int _{\mathbb{S}^{n-1}} \hspace{-0.1cm} \left( \int _{\hat{V}} \! \int _{\hat{Y}} \! \left( 1 - \left| \theta \cdot \frac{\hat{v}}{|\hat{v}|} \right| \right) \hat{f} d\hat{v}d\hat{y} \right) \hat{Q} d\theta - \tau r_L \hat{L} + \frac{\tau}{R^2}D_L \Delta _{\hat{x}} \hat{L} \\
& - \tau R_0^3 s_2^n \bar f k_2 \hat{L}\int _{\hat{V}} \int _{\hat{Y}} (1-\hat{y}_1-\hat{y}_2)\hat{f} d\hat{v}d\hat{y} + \tau R_0^2 s_2^n \bar f k_{-2}\int _{\hat{V}} \int _{\hat{Y}} \hat{y}_2\hat{f} d\hat{v}d\hat{y}.
\end{align*}

We impose first the normalization restrictions $\frac{s_2 \tau}{R} =1$ and $\frac{\tau}{R^2}D_L=1$.
The hyperbolic scaling corresponds to the choice
$$ \tau \bar p_l = \frac{1}{\varepsilon }, $$
i.e., the turning time $\frac{1}{\bar p_l}$ is very small compared to the typical time $\tau $. 

There are three other phenomena (cell membrane reactions, haptotaxis and quemotaxis) to consider. We rescale the corresponding terms, assuming also that their frequencies are small compared to the turning frequency $\bar p_l$. More precisely, we choose the following relations:
$$ \frac{\bar G}{R_0} = \varepsilon ^{a} \bar p_l , \ \ \bar p_h  = \varepsilon ^{b} \bar p_l, \ \ \bar p_c= \varepsilon ^{d}\bar p_l, $$
where $0<a<1$, $b,d \geq 1$.

To scale the other two equations, we recall that they are actually  macroscopic, so they will preserve their form. This is why we only define the scaled non--dimensional constants involved:
$$
\hat{\kappa}:= \tau R_0^2 s_2^n \bar{f} \kappa  , \quad
\hat r_L:=\tau r_L, \quad
$$
$$
\hat{k}_i:= \tau R_0^3 s_2^n \bar{f} k_i , \quad
\hat{k}_{-i}:= \tau R_0^2 s_2^n \bar{f} k_{-i}, \quad (i=1,2).
$$
Skipping the ``hat"{} for the non--dimensional variables, our system becomes
\begin{align}
& \varepsilon \left(  \frac{\partial f_\varepsilon }{\partial t} + v\cdot \nabla _x f_\varepsilon \right) + \varepsilon ^a \nabla _y \cdot (G(y, Q_\varepsilon , L_\varepsilon ) f_\varepsilon ) =  \varepsilon ^b \mathcal{H}(f_\varepsilon ,Q_\varepsilon ) \nonumber \\
& + \mathcal{L}(f_\varepsilon ) + \varepsilon ^d \mathcal{C}(f_\varepsilon , L_\varepsilon ) \label{faprox}
\end{align}
for the cell population, where
\begin{align}
\mathcal{H}(f_\varepsilon,Q_\varepsilon) (t,x,v,y) & =\int_V \int_{\mathbb{S}^{n-1}} \psi(v; v', \theta)f_\varepsilon(t,x,v',y)Q_\varepsilon(t,x,\theta ) d\theta dv' \nonumber \\
 & - f_\varepsilon(t,x,v,y)\bar{Q}_\varepsilon(t,x); \label{Hesc} \\
\mathcal{L}(f_\varepsilon)(t,x,v,y) & =  \int_V \alpha_1(y) T(v,v')f_\varepsilon(t,x,v',y)dv' \nonumber \\
 & -\alpha_1(y) f_\varepsilon(t,x,v,y); \label{Lesc} \\
\mathcal{C} (f_\varepsilon,L_\varepsilon)(t,x,v,y) & =  \int_V \alpha_2(y) K[\nabla L_\varepsilon](v,v')f_\varepsilon(t,x,v',y)dv'\nonumber \\
& - \alpha_2(y) f_\varepsilon(t,x,v,y); \label{Cesc}
\end{align}

and
\begin{align}
\frac{\partial Q_\varepsilon }{\partial t} = & \ -\kappa \left( \int _V \int _Y \left( 1 - \left| \theta \cdot \frac{v}{|v|} \right| \right) f_\varepsilon dvdy \right) Q_\varepsilon \nonumber \\
 & -k_1 Q_\varepsilon \int _V \int _Y (1-y_1-y_2)f_\varepsilon dvdy + \frac{k_{-1}}{\mathbb{S}^{n-1}}\int _V \int _Y y_1f_\varepsilon dvdy, \label{Qaprox} \\
\frac{\partial L_\varepsilon}{\partial t} = & \ \kappa\int _{\mathbb{S}^{n-1}}\left( \int _V \int _Y \left( 1- \left| \theta \cdot \frac{v}{|v|} \right|  \right) f_\varepsilon dvdy \right) Q_\varepsilon d\theta - r_L L_\varepsilon + \Delta _x L_\varepsilon  \nonumber \\
& - k_2 L_\varepsilon \int _V \int _Y (1-y_1-y_2)f_\varepsilon dvdy + k_{-2}\int _V \int _Y y_2f_\varepsilon dvdy,
\label{Laprox}
\end{align}
for the chemicals, where
$$ V=[ s, 1] \times \mathbb{S}^{n-1} , \   Y=\{(y_1,y_2) \in (0,1)\times (0,1) \ : \  y_1 +y_2 <1\}, $$
and $s:=s_1/s_2 $.
\subsection{Limiting equations}
We first collect some standard assumptions on the turning operator $\mathcal{L}$:
\begin{itemize}
\item \textbf{Solvability conditions.} $\mathcal{L}$ satisfies $\displaystyle \int_V \mathcal{L}(f)dv=0,$ and $\displaystyle  \int_V v\mathcal{L}(f) dv =0$.
\item \textbf{Kernel of $\mathcal{L}$.} For all $\rho \geq 0, U\in \mathbb{R}^n$, there exists a unique function $M_{\rho, U}\in L^1(V\times Y,(1+|v|)dv+dy)$ such that
\begin{equation}
 \mathcal{L}(M)=0,\qquad \int _V\int _Y M dvdy =\rho , \qquad \int _V\int _Y vM dvdy = \rho U.
 \label{pp1}
\end{equation}
\end{itemize}
Let us formally deduce the limiting equations. To that aim, we first study the equations verified by the moments of $f_\varepsilon $:
$$
\rho _\varepsilon := \int _V\int _Y f_\varepsilon dvdy, \qquad \rho _\varepsilon U_\varepsilon  := \int _V\int _Y vf_\varepsilon dvdy, \qquad \rho _\varepsilon W_\varepsilon :=\int _V\int _Y yf_\varepsilon dvdy. $$
If we make $\varepsilon =0$ in (\ref{faprox}) we obtain  $\mathcal{L}(f_0)=0$, and therefore we deduce that the limiting function necessarily has the form  $f_0=M_{\rho _0, U_0}$. Let us now derive the macroscopic equations verified by the moments. As usual,  integrating Eq. (\ref{faprox}) in $v$ and $y$, we get mass conservation:
\begin{equation}
\frac{\partial \rho _\varepsilon }{\partial t} + \nabla_x \cdot (\rho _\varepsilon U_\varepsilon ) =0.
\label{ppmass}
\end{equation}
Then, multiplying (\ref{faprox}) by $v$ and again integrating, we obtain:
\begin{align}
&\frac{\partial (\rho _\varepsilon U_\varepsilon)}{\partial t} + \nabla_x \cdot (\mathbb{P}_\varepsilon + \rho _\varepsilon U_\varepsilon \otimes U_\varepsilon ) \nonumber \\ \label{ppmomu}
&=   \varepsilon^{b-1} \int _V\int _Y v\mathcal{H}(f_\varepsilon, Q_\varepsilon) dvdy   + \varepsilon^{d-1}\int _V\int _Y v\mathcal{C}(f_\varepsilon, L_\varepsilon) dvdy,
\end{align}
where $\mathbb{P} _\varepsilon$ is the pressure tensor
$$ \mathbb{P} _\varepsilon (t,x):=\int_V \int_Y (v-U_\varepsilon)\otimes (v-U_\varepsilon)f_\varepsilon dvdy.
$$
Finally, multiplying (\ref{faprox}) by $y$ and integrating, we obtain:
\begin{equation}
\varepsilon \partial _t{\rho _\varepsilon W_\varepsilon } + \varepsilon \nabla_x \cdot  \int _V \int_Y y\otimes v f dvdy + \varepsilon^a (A_\varepsilon
W_\varepsilon  -b_\varepsilon) \rho_\varepsilon = 0,
\label{ppmomy}
\end{equation}
where the matrix $A_\varepsilon$ and the vector $b_\varepsilon$ are respectively given by
$$  A_\varepsilon := \begin{pmatrix} k_1 \bar{Q}_\varepsilon + k_{-1} & k_1 \bar{Q}_\varepsilon \\ k_2 L_\varepsilon & k_2 L_\varepsilon + k_{-2} \end{pmatrix},    \quad
b_\varepsilon= \begin{pmatrix} k_1 \bar{Q_\varepsilon} \\ k_2 L_\varepsilon \end{pmatrix}.
$$
Now, we assume that the solutions are small perturbations of the limit, \\ $f_\varepsilon = M_{\rho _0, U_0} + \varepsilon f_1$, hence a  Hilbert expansion of $f_\varepsilon$ around $M_{\rho _0, U_0}$.
Inserting this profile into (\ref{ppmass}), (\ref{ppmomu}) and (\ref{ppmomy}), we obtain
\begin{align*}
& \frac{\partial \rho_0 }{\partial t} + \nabla_x \cdot (\rho_0 U_0) =  \  0, \\
&\frac{\partial (\rho _0 U_0)}{\partial t} + \nabla_x \cdot(\mathbb{P}_0 + \rho _0 U_0\otimes U_0 ) =   \varepsilon^{b-1} \int _V\int _Y v\mathcal{H}(M_{\rho _0, U_0}, Q_\varepsilon) dvdy \\
  &  \hspace{3 cm}+  \varepsilon^{d-1}\int _V\int _Y v\mathcal{C}(M_{\rho _0, U_0}, L_\varepsilon) dvdy +  \mathcal{O}(\varepsilon ^b)  + \mathcal{O}(\varepsilon ^d), \\
& (A_\varepsilon W_\varepsilon  -b_\varepsilon) \rho_0 =  \  \mathcal{O}(\varepsilon ^{1-a}).
\end{align*}

Formally,  denoting $Q_0$, $L_0$, $A_0$ and $b_0$  the respective limits of $Q_\varepsilon $, $L_\varepsilon$, $A_\varepsilon$ and $b_\varepsilon$ and taking the limit $\varepsilon \rightarrow 0$, we obtain:
\begin{align*}
& \frac{\partial \rho_0 }{\partial t} + \nabla_x \cdot (\rho_0 U_0) =  \  0, \\
&\frac{\partial (\rho _0 U_0)}{\partial t} + \nabla_x \cdot(\mathbb{P}_0 + \rho _0 U_0\otimes U_0 ) =  \delta_{b,1} \int _V\int _Y v\mathcal{H}(M_{\rho _0, U_0}, Q_0) dvdy \\
  &  \hspace{3 cm}+  \delta_{d,1}  \int _V\int _Y v\mathcal{C}(M_{\rho _0, U_0}, L_0) dvdy , \\
& (A_0 W_0  -b_0) \rho_0 =  0,
\end{align*}
where different regimes can be observed ($\delta_{i,j}$ stands for the Kronecker delta), depending on the choice of the parameters $b$ and $d$. Actually, if $b,d>1$, it is a  pure hyperbolic system; if $b>1$ and $d=1$, we get a system with an additional chemotactic term; for $b=1$ and $d>1$ the additional term entails haptotaxis; finally, for  $b=d=1$ we find the whole hyperbolic system including both phenomena. In all cases, the third equation stands for a linear system that can be solved explicitly, so obtaining the limiting distributions of compounds, i.e. the $y$--activity--moment of $M_{\rho_0,U_0}$  expressed as follows:
\begin{equation}
 \hspace{-0.2cm}\rho_0W_0 = \! \int _V \! \int _Y \! y M_{\rho_0,U_0} dy \, dv = \frac{\rho_0 }{k_1k_{-2} \bar{Q_0}  + k_{-1}k_2 L_0 + k_{-1}k_{-2}} \! \begin{pmatrix} k_1k_{-2}\bar{Q_0}  \\ k_{-1}k_2 L_0 \end{pmatrix}. \label{limY}
\end{equation}
On the other hand, limiting equations for $\{ Q_\varepsilon \}$ and $\{ L_\varepsilon \}$ can be deduced. For example, in the $\{ Q_\varepsilon \}$ equation we introduce the expansion for $f_\varepsilon $, and formally taking the limit $\varepsilon \to 0$,  we obtain:
\begin{align*}
\frac{\partial Q_0 }{\partial t} = & -\kappa \left( \int _V \int _Y \left(1- \left| \theta \cdot \frac{v}{|v|} \right| \right) M_{\rho_0, U_0} dvdy \right) Q_0  \nonumber \\
& -k_1 Q_0 \int _V \int _Y (1-y_1-y_2)M_{\rho_0, U_0} dvdy +\frac{k_{-1}}{|\mathbb{S}^{n-1}|} \int _V \int _Y y_1M_{\rho_0, U_0} dvdy.
\end{align*}
By using  now the expression for the compounds stated in (\ref{limY}), we get:
\begin{align*}
\frac{\partial Q_0 }{\partial t} = & \ -\kappa \left( \int _V \int _Y \left(1- \left| \theta \cdot \frac{v}{|v|} \right| \right) M_{\rho_0, U_0} dvdy \right) Q_0  \nonumber \\
& + \frac{k_1 k_{-1} k_{-2} \rho_0 }{k_1k_{-2} \bar{Q}_0  + k_{-1}k_2 L_0 + k_{-1}k_{-2}}\left( \frac{\bar{Q}_0 }{|\mathbb{S}^{n-1}|}  -Q_0 \right).
\end{align*}
Note here that the last term cancels when integrating with respect to $\theta$. Applied to $L_\epsilon$, the same argument yields
\begin{align*}
\frac{\partial L_0}{\partial t} = & \ \kappa \int _{\mathbb{S}^{n-1}}\left( \int _V \int _Y \left( 1 - \left| \theta \cdot \frac{v}{|v|} \right| \right) M_{\rho_0, U_0} dvdy \right) Q_0 d\theta - r_L L_0 + \Delta _x L_0  \nonumber \\
& -k_2 L_0 \int _V \int _Y (1-y_1-y_2)M_{\rho_0, U_0} dvdy + k_{-2}\int _V \int _Y y_2M_{\rho_0, U_0} dvdy,
\end{align*}
that after use of (\ref{limY})  becomes:
$$
\frac{\partial L_0}{\partial t} = \ \kappa \int _{\mathbb{S}^{n-1}}\left( \int _V \int _Y \left( 1 - \left| \theta \cdot \frac{v}{|v|} \right| \right) M_{\rho_0, U_0} dvdy \right) Q_0 d\theta - r_L L_0 +  \Delta _x L_0.
$$
The main result of this section is the following.

\begin{teorema} \label{teolim}
Let $\{ (f_\varepsilon, Q_\varepsilon, L_\varepsilon )\}$ be the solution of (\ref{faprox})-- (\ref{Qaprox})--(\ref{Laprox}) verifying the hypotheses  of Lemma \ref{IntOp} and
$$\{\| f_\varepsilon \| _{L^\infty (0,T; (L^1 \cap L^\infty)(\mathbb{R}^n \times V \times Y))} + \| Q_{0,\varepsilon } \| _{L^\infty(\mathbb{R} ^n \times \mathbb{S} ^{n-1} )} + \| L_{0,\varepsilon } \| _{L^\infty(\mathbb{R} ^n)} \} <C < \infty .$$
Assume also that  the sequence $\{ (f_\varepsilon, Q_\varepsilon, L_\varepsilon )\}$ converges a.e.
Then, the a.e. limit of $f_\varepsilon$ is the function $M_{\rho, U}$ given by the properties of $\mathcal{L}$, where $\rho$, $U$ and $W$ are the respective $L^1$--strong limits of $\rho _\varepsilon $, $ U_\varepsilon$ and $ W_\varepsilon$. Also, the sequences $ \{ Q_\varepsilon\} , \{L_\varepsilon \}$ converge $L^\infty$--weakly${}^\ast$  to some functions $Q, L$.

Moreover, $\rho, U,$ and $W$ solve the following system:
\begin{align*}
&\frac{\partial \rho }{\partial t} + \nabla_x \cdot(\rho U) =  0, \\
& \frac{\partial (\rho U)}{\partial t} + \nabla_x \cdot(\mathbb{P}_0 + \rho U\otimes U) = \delta_{b-1} \int _V\int _Y v\mathcal{H}(M_{\rho, U}, Q) dvdy \\
 & \hspace{3 cm}  + \delta_{d-1} \int _V\int _Y v\mathcal{C}(M_{\rho, U}, L) dvdy,\\
& \rho W = \frac{\rho}{k_1k_{-2} \bar{Q}  + k_{-1}k_2 L + k_{-1}k_{-2}} \begin{pmatrix} k_1k_{-2}\bar{Q}  \\ k_{-1}k_2 L \end{pmatrix}.
\end{align*}
Besides, $Q$ and $ L$ satisfy
\begin{align*}
\frac{\partial Q }{\partial t} = & \ -\kappa \left( \int _V \int _Y \left(1- \left| \theta \cdot \frac{v}{|v|} \right| \right) M_{\rho, U} dvdy \right) Q  \nonumber \\
& + \frac{k_1 k_{-1} k_{-2} \rho }{k_1k_{-2} \bar{Q}  + k_{-1}k_2 L + k_{-1}k_{-2}}\left( -Q + \frac{\bar{Q} }{|\mathbb{S}^{n-1}|} \right),\label{limQ}
\end{align*}
$$
\frac{\partial L}{\partial t} =  \ \kappa \int _{\mathbb{S}^{n-1}}\left( \int _V \int _Y \left( 1 - \left| \theta \cdot \frac{v}{|v|} \right| \right) M_{\rho, U} dvdy \right) Q d\theta - r_L L + \Delta _x L. \label{limL} $$
\end{teorema}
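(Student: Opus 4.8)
The plan is to turn the formal computations preceding the statement into a rigorous passage to the limit $\varepsilon\to 0$, first in the moment system (\ref{ppmass})--(\ref{ppmomu})--(\ref{ppmomy}) and then in the macroscopic equations (\ref{Qaprox})--(\ref{Laprox}). The whole argument hinges on upgrading the assumed a.e. convergence of $f_\varepsilon$ to \emph{strong} $L^1$ convergence of its velocity--activity moments, against which the merely weak-$\ast$ convergent sequences $Q_\varepsilon$, $L_\varepsilon$ can be integrated. First I would observe that, since $V\times Y$ is bounded and $\|f_\varepsilon\|_\infty$ is uniformly bounded, the a.e. convergence together with the uniform $L^1\cap L^\infty$ bound yields, via the Vitali (dominated) convergence theorem, strong $L^1$ convergence of $\rho_\varepsilon=\int\!\int f_\varepsilon$, of $\rho_\varepsilon U_\varepsilon=\int\!\int v f_\varepsilon$, of $\rho_\varepsilon W_\varepsilon=\int\!\int y f_\varepsilon$, and of the full second moment $\int\!\int v\otimes v\, f_\varepsilon$, the kernels $v$, $y$, $v\otimes v$ being bounded on $V\times Y$. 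The same bounds give $Q_\varepsilon\to Q$ and $L_\varepsilon\to L$ weak-$\ast$ in $L^\infty$ to their a.e. limits. To identify $f_0$, I would test (\ref{faprox}) against a smooth compactly supported function: every term carrying a positive power of $\varepsilon$ drops out, while the continuity of $\mathcal{L}$ (Lemma \ref{IntOp}) gives $\mathcal{L}(f_0)=0$ weakly; the kernel characterization (\ref{pp1}) then forces $f_0=M_{\rho,U}$ with $\rho=\int\!\int f_0$ and $\rho U=\int\!\int v f_0$.

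Next I would pass to the limit in the mass and momentum balances. Equation (\ref{ppmass}) passes directly in the distributional sense, since both $\rho_\varepsilon$ and $\rho_\varepsilon U_\varepsilon$ converge strongly in $L^1$. The apparent nonlinearity in (\ref{ppmomu}) is removed by the algebraic identity $\mathbb{P}_\varepsilon+\rho_\varepsilon U_\varepsilon\otimes U_\varepsilon=\int_V\int_Y v\otimes v\, f_\varepsilon\,dv\,dy$, so that the flux is a \emph{linear} second moment and converges strongly to $\mathbb{P}_0+\rho U\otimes U$. The source terms are then controlled through Lemma \ref{IntOp}: the prefactors $\varepsilon^{b-1}$ and $\varepsilon^{d-1}$ annihilate the uniformly bounded haptotactic and chemotactic moments unless $b=1$ (resp. $d=1$), in which case the bilinear term converges because it pairs the strongly convergent $f_\varepsilon$ with the weak-$\ast$ convergent $Q_\varepsilon$ (resp. $L_\varepsilon$) against a bounded kernel. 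This is exactly what produces the Kronecker-delta source terms in the limiting momentum equation.

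For the activity relation I would divide (\ref{ppmomy}) by $\varepsilon^a$; since $0<a<1$, the transport contributions carry the vanishing factor $\varepsilon^{1-a}$, leaving $(A_\varepsilon W_\varepsilon-b_\varepsilon)\rho_\varepsilon\to 0$. Writing this as $A_\varepsilon(\rho_\varepsilon W_\varepsilon)-b_\varepsilon\rho_\varepsilon$, I pass to the limit as a product of the weak-$\ast$ convergent entries of $A_\varepsilon$ and $b_\varepsilon$ (built from $\bar{Q}_\varepsilon$, $L_\varepsilon$) with the strongly convergent moments $\rho_\varepsilon W_\varepsilon$ and $\rho_\varepsilon$, obtaining $(A_0 W-b_0)\rho=0$; solving this explicit $2\times 2$ linear system gives the stated closed expression for $\rho W$. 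Finally, the equations for $Q_\varepsilon$ and $L_\varepsilon$ pass to the limit by the same weak-$\ast$-times-strong mechanism (the moments of $f_\varepsilon$ converge strongly, $Q_\varepsilon$ and $L_\varepsilon$ converge weak-$\ast$, and $\Delta_x L_\varepsilon\to\Delta_x L$ distributionally), and substituting the closed expression for the activity moments yields the reduced $Q$ and $L$ equations, with the haptotactic term dropping out after integration in $\theta$ exactly as in the formal computation.

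The delicate point, and the one I expect to be the main obstacle, is precisely the justification of strong $L^1$ convergence of the moments from the a.e. convergence hypothesis: every nonlinear or bilinear term in the system (the momentum flux, the products $A_\varepsilon\rho_\varepsilon W_\varepsilon$, and the coupling of $Q_\varepsilon$, $L_\varepsilon$ with moments of $f_\varepsilon$) is of weak-$\ast$-times-strong type and would fail to pass under mere weak convergence. Once this compactness is secured through the Vitali argument---uniform integrability in $x$ combined with dominated convergence in the bounded variables $v$ and $y$---and once one exploits the collapse of the momentum flux to the linear second moment, all remaining passages are routine.
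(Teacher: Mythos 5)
Your proposal is correct and follows essentially the same route as the paper: uniform bounds on the bounded sets $V\times Y$ plus the assumed a.e. convergence upgrade to strong $L^1_{loc}$ convergence of $f_\varepsilon$ and its moments (the paper phrases this via Dunford--Pettis, you via Vitali, but it is the same mechanism), the bilinear terms pass to the limit by pairing strong convergence of $f_\varepsilon$ with weak-$\ast$ convergence of $Q_\varepsilon$, $L_\varepsilon$, and the distributional limit of (\ref{faprox}) gives $\mathcal{L}(f_0)=0$ and hence $f_0=M_{\rho,U}$. Your explicit remarks on the momentum flux collapsing to the linear second moment and on dividing (\ref{ppmomy}) by $\varepsilon^a$ make precise steps the paper leaves implicit, but they do not change the argument.
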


\begin{proof} We first observe that the variables $v$ and $y$ are defined on bounded sets. Then, it can be deduced that the sequences of moments
$ \{ \rho_\varepsilon \} $, $\{ \rho_\varepsilon U_\varepsilon \}$, and $\left\{ \rho_\varepsilon W_\varepsilon  \right\} $
are uniformly bounded in $L^\infty(0,T; L^\infty(\mathbb{R}^n))$. On the other hand, returning to the inequalities (\ref{desQ}) and (\ref{desL}) for $Q_\varepsilon$ and $L_\varepsilon$ with $h=0$, it follows that the sequences $\{ Q_\varepsilon \}$ and $\{ L_\varepsilon \}$ are uniformly bounded. So, we can pass to the limit, up to a subsequence, in the weak${}^\ast$ topology of $L^\infty $ in all of them.

Now, the  integral operators $\{ \mathcal{H}(f_\varepsilon, Q_\varepsilon) \}$, $\{ \mathcal{L}(f_\varepsilon ) \}$ and $\{ \mathcal{C}(f_\varepsilon, L_\varepsilon) \}$ can be also uniformly bounded in $L^\infty(0,T;L^\infty(\mathbb{R}^n \times V \times Y ) ) $, so that their convergence is guaranteed. Nonetheless, we have to identify the limits of these sequences which involve quadratic terms.

The weak${}^\ast$ limit of $\{ f_\varepsilon \}$ has to coincide with its pointwise limit, called $f_0$. Then, via the Dunford--Pettis theorem this convergence holds also weakly in $L^1([0,T]\times \mathbb{R}^n\times V \times Y)$ locally, and thus strongly. As consequence $ \mathcal{L}(f_\varepsilon) $ converges strongly to $\mathcal{L}(f_0) $ in the same space, too.

The shown convergences are enough to take limits,  at least in a distributional sense,  in the linear terms of the  equations. Now we have to take care of the nonlinear terms. More precisely, we only have to observe that the strong convergence in $L^1_{loc} ([0,T]\times \mathbb{R}^n\times V \times Y)$ of $f_\varepsilon$, combined with the weak${}^\ast$--$L^\infty $ convergence of $Q_\varepsilon $ and $L_\varepsilon$, produces the following convergence
$$ \mathcal{H}(f_\varepsilon, Q_\varepsilon ) \rightarrow \mathcal{H}(f_0, Q), \quad \mbox{and } \quad  \mathcal{C}(f_\varepsilon, L_\varepsilon ) \rightarrow \mathcal{C}(f_0, L),
$$
in a distributional sense. Analogously, due to the fact that $v$ and $y$ are defined in sets of finite measure, this convergence also holds for the moments
$$
\int_V \int_Y v \mathcal{H}(f_\varepsilon, Q_\varepsilon )dvdy, \qquad \int_V \int_Y v\mathcal{C}(f_\varepsilon, L_\varepsilon )dvdy.
$$
We can now take the distributional limit in Eq. (\ref{faprox}) to obtain $\mathcal{L}(f_0)=0$. Using the properties of $\mathcal{L}$, we deduce that $f_0=M_{\rho, U}$, with $\rho$ and $U$ as in (\ref{pp1}). Again, the variables $v$ and $y$ are defined in sets of finite measure, so the previous argument can be rewritten for the sequences of  moments, so that the following convergences are deduced:
$$ \rho_\varepsilon \rightarrow \rho, \quad  \rho_\varepsilon U_\varepsilon \rightarrow \rho U, \quad  \mathbb{P}_\varepsilon \rightarrow \mathbb{P}_0, \quad \mbox{and } \  \rho_\varepsilon W_\varepsilon \rightarrow \rho W. $$
Finally, we can pass to the limit in the macroscopic equations (\ref{ppmass}), (\ref{ppmomu}), (\ref{ppmomy}), (\ref{Qaprox}) and (\ref{Laprox}) to deduce the announced result.
\end{proof}

\subsection{A particular case}
There are two ``hidden"{} problems in the previous development: The first one is that the system of macroscopic equations is not closed. Actually, the pressure tensor $\mathbb{P}_0$ and the integral operators appearing in the macroscopic equation for $\rho U$, involve some integrals with respect to the microscopic state that have not been expressed as functions of the macroscopic variables. Solving this problem requires an explicit expression of the function $M_{\rho,U}$. Here we present a particular case, as done in \cite{BBNS1}, choosing a particular turning operator  $\cal L$ for which all the computations can be done.

Consider that $\alpha_1$ is a constant function and a kernel with the form
$$ T(v,v'):= \lambda + \beta v\cdot v' ,$$
with positive constants $\lambda $ and $\beta $  verifying
\begin{equation} \lambda =\beta \frac{1-s^{n+2}}{(1-s^n)(n+2)}. \label{relL} \end{equation}
Then, the operator $\mathcal{L}$ given in (\ref{Lesc}) can be written as follows:
\begin{equation}
\mathcal{L} (f) := \lambda \int _V f(t,x,v',y) dv' + \beta v \cdot \int _V v' f(t,x,v',y) dv' - \lambda |V|f(t,x,v,y). \label{nuevoL}
\end{equation}
Note that, following the definition of  $\cal L$, the free parameter $\lambda$ has to fulfill $\lambda |V|=\alpha_1$ .
Let us compile the properties of this operator.

\begin{lema}
Let $\mathcal{L}$ be given by (\ref{nuevoL}), with $\lambda $ and $\beta $ satisfying (\ref{relL}). Then, the following properties are fulfilled by $\mathcal{L}$:
\begin{itemize}
\item $\mathcal{L}$ verifies the hypothesis of Lemma \ref{IntOp}.
\item $\mathcal{L}$ verifies the hyperbolic solvability conditions: $\int \mathcal{L}(f)dv= \int v\mathcal{L}(f) dv =0$.
\item Given $\rho \geq 0$ and $ U\in \mathbb{R}^n$, the function $M_{\rho , U}$ in the kernel of $\cal L$ verifying (\ref{pp1}) is given by
\begin{equation} M_{\rho , U}:= \frac{\rho }{|V||Y|} \left( 1+\frac{\beta }{\lambda } v\cdot U \right). \label{MrhoU} \end{equation}
\end{itemize}
\end{lema}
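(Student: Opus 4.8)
The plan is to verify the three bulleted claims about the operator $\mathcal{L}$ defined in (\ref{nuevoL}) by direct computation, exploiting the algebraic relation (\ref{relL}) between $\lambda$ and $\beta$, and the symmetry of the velocity domain $V=[s,1]\times\mathbb{S}^{n-1}$ under $v\mapsto -v$. The key quantities will be the moments of $V$: writing $|V|=(1-s^n)|\mathbb{S}^{n-1}|/n$ and noting that $\int_V v\,dv=0$ by antisymmetry, while $\int_V v\otimes v\,dv = c\,\mathrm{Id}$ with $c=\tfrac{1}{n}\int_V |v|^2\,dv = \tfrac{|\mathbb{S}^{n-1}|}{n}\cdot\tfrac{1-s^{n+2}}{n+2}$. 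The relation (\ref{relL}) is precisely engineered so that $\beta c = \lambda|V|$, which will be the linchpin of every calculation.

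First I would check the hypotheses of Lemma \ref{IntOp}, namely the normalization $\int_V T(v,v')\,dv=1$ and the growth bound $|T(\cdot,v)|\le C|v|$. The normalization $\int_V(\lambda+\beta v\cdot v')\,dv = \lambda|V| + \beta v'\cdot\int_V v\,dv = \lambda|V|$ reduces, via $\lambda|V|=\alpha_1$ and the convention that $T$ is normalized against the weight $\alpha_1$, to the stated condition; the bound on $T$ is immediate since $|v|,|v'|$ are bounded on $V$. Second, for the solvability conditions I would substitute (\ref{nuevoL}) and integrate. For $\int_V\mathcal{L}(f)\,dv$, the first two terms give $(\lambda + \beta v\cdot(\cdots))$ integrated over $v$, where the $v$-linear piece vanishes by antisymmetry, leaving $\lambda|V|\int_V f\,dv' $, which exactly cancels the last term $\lambda|V|\int_V f\,dv$. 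For $\int_V v\,\mathcal{L}(f)\,dv$ the computation is the mirror image: the $\lambda$-term and the final term contribute zero (odd integrand), while the $\beta$-term yields $\beta\bigl(\int_V v\otimes v\,dv\bigr)\int_V v'f\,dv' = \beta c\int_V v'f\,dv'$, which is cancelled against $-\lambda|V|\int_V vf\,dv$ precisely because $\beta c=\lambda|V|$.

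Third, and most substantively, I would verify that $M_{\rho,U}$ in (\ref{MrhoU}) lies in the kernel and realizes the prescribed moments. Plugging $M=\tfrac{\rho}{|V||Y|}(1+\tfrac{\beta}{\lambda}v\cdot U)$ into $\mathcal{L}$, the constant part is annihilated by the balance $\lambda|V|=\lambda|V|$ between first and third terms (the $v\cdot v'$ middle term kills the constant by antisymmetry in $v'$), while the part linear in $v'$ survives only in the middle term, producing $\beta v\cdot\bigl(\int_V v'\otimes v'\,dv'\bigr)\tfrac{\beta}{\lambda}U\cdot\tfrac{\rho}{|V||Y|}$, which must cancel $-\lambda|V|\cdot\tfrac{\beta}{\lambda}(v\cdot U)\tfrac{\rho}{|V||Y|}$; this cancellation is again exactly $\beta c=\lambda|V|$. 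For the moment conditions, $\int_V\int_Y M\,dv\,dy=\tfrac{\rho}{|V||Y|}\bigl(|V||Y| + \tfrac{\beta}{\lambda}|Y|U\cdot\int_V v\,dv\bigr)=\rho$ since $\int_V v\,dv=0$, and $\int_V\int_Y vM\,dv\,dy = \tfrac{\rho}{|V||Y|}\tfrac{\beta}{\lambda}|Y|\bigl(\int_V v\otimes v\,dv\bigr)U = \tfrac{\rho\beta c}{\lambda|V|}U=\rho U$, using $\beta c/(\lambda|V|)=1$ one final time.

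The main obstacle is purely bookkeeping: confirming that (\ref{relL}) indeed encodes $\beta c=\lambda|V|$ after substituting the explicit spherical-shell moments. Once that single identity is established, every vanishing and every moment normalization follows mechanically; uniqueness of $M_{\rho,U}$ in its affine form is then clear since the kernel of $\mathcal{L}$ restricted to functions of the form $a+b\cdot v$ is two-parameter, pinned down uniquely by the two moment constraints on $\rho$ and $\rho U$.
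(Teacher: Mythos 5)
Your proposal is correct and takes essentially the same route as the paper: both arguments hinge on the single identity $\beta\int_V v\otimes v\,dv=\lambda|V|\,{\mathbb I}$, which is exactly what relation (\ref{relL}) encodes, and both verify the solvability conditions and the kernel/moment properties of $M_{\rho,U}$ by the same direct computations with the spherical--shell moments $\int_V v\,dv=0$ and $\int_V v_iv_k\,dv=\delta_{ik}|\mathbb{S}^{n-1}|(1-s^{n+2})/(n(n+2))$. One phrase is a slip: in the check of $\int_V v\,\mathcal{L}(f)\,dv$ the term $-\lambda|V|\int_V vf\,dv$ does not vanish by oddness --- it is precisely what cancels the $\beta$-contribution, as your very next clause correctly states.
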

\begin{proof}  Checking that $\mathcal{L}$ verifies the hypotheses of Lemma \ref{IntOp} is immediate.

The solvability conditions can be easily deduced after integrating expression (\ref{nuevoL}) and using
$$ |V| = \frac{|\mathbb{S}^{n-1}|}{n} (1-s^n), \ \ \int _V vdv =0, \ \ \int _V v_i v_k dv=\frac{\delta_{ik} \, |\mathbb{S}^{n-1}|}{n(n+2) }(1-s^{n+2}). $$
We just check the second one. Multiplying by $v$ and integrating, we find
\begin{eqnarray*}
\int_Y \int_V v \mathcal{L}(f)dvdy & = & \lambda \int_Y \int_V \int_V vf(t,x,v',y)dv'dvdy \\
 & & + \beta \int _Y \int _V \int _V (v\otimes v)v'f(t,x,v',y) dv'dvdy \\
 & & - |V|\lambda \int _Y \int _V vf(t,x,v,y) dvdy.
\end{eqnarray*}
The first term vanishes, via the Fubini theorem and $\int _V vdv =0 $. Analogously,
\begin{align*}
& \beta \int _Y \int _V \int _V (v\otimes v)v'f(t,x,v',y) dv'dvdy \\
& = \frac{|\mathbb{S}^{n-1}|}{n(n+2)}(1-s^{n+2}) \beta \int _Y \int _V vf(t,x,v,y) dvdy,
\end{align*}
so that
$$\int_Y \int_V v \mathcal{L}(f)dvdy = \left(\frac{|\mathbb{S}^{n-1}|}{n(n+2)}(1-s^{n+2}) \beta - |V|\lambda   \right) \int _Y \int _V vf(t,x,v,y) dvdy, $$
which vanishes once the value of $|V|$ and the relation (\ref{relL}) are considered.

Finally, it is straightforward to check that the function $M_{\rho,U}$ given by (\ref{MrhoU}) verifies  (\ref{pp1}), and then generates the kernel of $\cal L$.
\end{proof}

Let us now compute  the pressure tensor $\mathbb{P}_0$ and the integral operators (\ref{Hesc}) and (\ref{Cesc})  for this choice of the turning operator. First, we calculate:
\begin{eqnarray*}
\int _Y \int _V v\otimes v M_{\rho, U}dvdy & = & \frac{\rho }{|V||Y|}\int _Y \int _V v\otimes v dvdy \\
 & & + \frac{\rho }{|V||Y|}\frac{\beta}{\lambda} \int _Y \int _V (v\otimes v) v\cdot U dvdy \\
 & = & \rho \frac{1-s^{n+2}}{1-s^{n}}\frac{2}{(n+2)} {\mathbb I} + 0,
\end{eqnarray*}
where  ${\mathbb I}$ denotes the identity matrix. Then, we conclude that
\begin{eqnarray*}
\mathbb{P}_0  =  2\frac{1-s^{n+2}}{(n+2)(1-s^n)}\rho {\mathbb I} -\rho U\otimes U.
\end{eqnarray*}

Now, we can compute the $v$-moments of the integral operators. Define first the following macroscopic quantities:
\begin{align*}
\Psi^1(\theta ):=&  \int_V\int_V v \psi(v; v',\theta )dvdv', \\
\Psi^2(\theta ):=&  \int_V\int_V v\otimes v' \psi(v; v',\theta )dvdv',\\
\mathcal{K}^1[L]:=&   \int_V\int_V v K[\nabla L](v, v')dvdv',\\
\mathcal{K}^2[L]:=&   \int_V\int_V v\otimes v'  K[\nabla L](v, v')dvdv',
\end{align*}
and construct the macroscopic integral operators:
\begin{align*}
H(\rho, U, Q):= & \frac{\rho }{|V|} \left( \int \left( \Psi^1(\theta)+\frac{\beta}{\lambda}\Psi^2(\theta) \cdot U \right) Q(\theta)d\theta - \bar{Q}|V| U \right), \\
C(\rho, U, L):= & \frac{\rho \, \alpha_2 }{|V|} \left( \mathcal{K}^1[L]+\left( \frac{\beta}{\lambda}\mathcal{K}^2[L] - |V| \mathbb{I} \right) \cdot U \right).
\end{align*}
Using the expression (\ref{MrhoU}) in (\ref{Hesc}) and (\ref{Cesc}), a simple calculation leads to:
\begin{align*}
\int_V \int_Y v\mathcal{H}(M_{\rho, U}, Q) dvdy = & H(\rho, U, Q), \\
\int_V \int_Y v\mathcal{C}(M_{\rho, U}, L) dvdy = & C(\rho, U, L).
\end{align*}

Finally, Theorem \ref{teolim} can be  rewritten for this particular case as follows.

\begin{teorema} 
Under the hypotheses of Theorem \ref{teolim} and with the turning operator given by (\ref{nuevoL}), the limiting equations for $\rho $ and $\rho U$ are the following:
\begin{align*}
& \frac{\partial \rho }{\partial t} + \nabla_x \cdot(\rho U) = 0, \\
& \frac{\partial (\rho U)}{\partial t} + 2\frac{1-s^{n+2}}{(n+2)(1-s^n)}\nabla _x \rho =  \delta_{b-1} H(\rho, U, Q) + \delta_{d-1} C(\rho, U, L),\\
& \rho W = \frac{\rho}{k_1k_{-2} \bar{Q}  + k_{-1}k_2 L + k_{-1}k_{-2}} \begin{pmatrix} k_1k_{-2}\bar{Q}  \\ k_{-1}k_2 L \end{pmatrix}.
\end{align*}
Also, the limiting equations for $Q$ and $L$ can be written as follows:
$$ \frac{\partial Q }{\partial t} = \ -\kappa \frac{\rho }{|V|}g(\theta) Q+ \frac{k_1 k_{-1} k_{-2} \rho }{k_1k_{-2} \bar{Q}  + k_{-1}k_2 L + k_{-1}k_{-2}}\left( -Q + \frac{\bar{Q} }{|\mathbb{S}^{n-1}|} \right), $$
$$
\frac{\partial L}{\partial t} = \ \kappa \frac{\rho }{|V|} \int _{\mathbb{S}^{n-1}}g(\theta)Q(\theta ) d\theta - r_L L + \Delta _x L,
$$
where $g(\theta )=\displaystyle \int _V (1-\Big|\frac{\theta \cdot v}{|v|}\Big|) dv. $
\end{teorema}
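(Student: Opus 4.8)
The plan is to specialize the abstract limiting system of Theorem \ref{teolim} by inserting the explicit expressions already established for this choice of $\mathcal{L}$. Since the preceding lemma gives the Maxwellian $M_{\rho,U}$ in closed form (\ref{MrhoU}), and the computations carried out just before the statement provide the pressure tensor $\mathbb{P}_0$ together with the $v$--moments of $\mathcal{H}$ and $\mathcal{C}$ in terms of the macroscopic operators $H$ and $C$, the only remaining task is an algebraic substitution accompanied by a couple of parity arguments. The continuity equation and the algebraic relation for $\rho W$ are inherited unchanged from Theorem \ref{teolim}.

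For the momentum equation I would substitute $\mathbb{P}_0=2\frac{1-s^{n+2}}{(n+2)(1-s^n)}\rho\,\mathbb{I}-\rho U\otimes U$ into the flux $\mathbb{P}_0+\rho U\otimes U$, observing that the convective contribution $\rho U\otimes U$ cancels exactly and leaves the pure pressure term $2\frac{1-s^{n+2}}{(n+2)(1-s^n)}\rho\,\mathbb{I}$, whose divergence is precisely the gradient $2\frac{1-s^{n+2}}{(n+2)(1-s^n)}\nabla_x\rho$ appearing in the statement. The source terms then follow at once from the identities $\int_V\int_Y v\,\mathcal{H}(M_{\rho,U},Q)\,dvdy=H(\rho,U,Q)$ and $\int_V\int_Y v\,\mathcal{C}(M_{\rho,U},L)\,dvdy=C(\rho,U,L)$ recorded above.

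Next, for the $Q$ and $L$ equations the key step is to evaluate the degradation integral $\int_V\int_Y\bigl(1-\bigl|\tfrac{\theta\cdot v}{|v|}\bigr|\bigr)M_{\rho,U}\,dvdy$. Inserting (\ref{MrhoU}), the integration in $y$ yields a factor $|Y|$ and the remaining $v$--integral splits into a part independent of $U$ and a part proportional to $\tfrac{\beta}{\lambda}v\cdot U$. The crucial observation is that $V=[s,1]\times\mathbb{S}^{n-1}$ is invariant under $v\mapsto-v$, while the weight $1-\bigl|\tfrac{\theta\cdot v}{|v|}\bigr|$ is even and $v\cdot U$ is odd in $v$; hence the drift term integrates to zero and only the isotropic part survives, giving $\frac{\rho}{|V|}g(\theta)$ with $g(\theta)=\int_V\bigl(1-\bigl|\tfrac{\theta\cdot v}{|v|}\bigr|\bigr)dv$. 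Substituting this value into the two equations of Theorem \ref{teolim} produces the stated $Q$--equation, whose reaction term is inherited verbatim, and, after integration in $\theta$, the stated $L$--equation.

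No genuine analytical difficulty arises here, since convergence and the entire structure of the limit are already supplied by Theorem \ref{teolim}; the only point demanding care is to track where the linear--in--$U$ part of $M_{\rho,U}$ is retained and where it disappears. It is retained in the momentum source, through the moments $\Psi^2$ and $\mathcal{K}^2$ defining $H$ and $C$, whereas it cancels in the scalar degradation integrals by the parity argument above. Keeping this distinction consistent is the main, albeit modest, obstacle.
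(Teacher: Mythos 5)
Your proposal is correct and follows essentially the same route as the paper, which presents this theorem as an immediate consequence of the explicit form (\ref{MrhoU}) of $M_{\rho,U}$, the computation of $\mathbb{P}_0$, and the identities expressing the $v$--moments of $\mathcal{H}$ and $\mathcal{C}$ as $H(\rho,U,Q)$ and $C(\rho,U,L)$. The parity argument you supply for the degradation integral (the odd part $\tfrac{\beta}{\lambda}v\cdot U$ of $M_{\rho,U}$ integrating to zero against the even weight $1-\bigl|\tfrac{\theta\cdot v}{|v|}\bigr|$ over the symmetric set $V$) is exactly the step the paper leaves implicit, and you carry it out correctly.
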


\section*{Acknowledgements}
Partially supported by the Spanish Government, project MTM2011--23384 and Junta de Andaluc\'ia Project FQM-954.

\bigskip

\end{document}